\documentclass[12pt,reqno]{amsart}
\usepackage[margin=1in]{geometry}
\usepackage{amssymb,amsfonts,amsmath,amsthm}
\usepackage{mathrsfs}
\usepackage{enumerate}
\usepackage{tikz}
\usepackage{tikz-3dplot}
\usepackage{caption}
\usepackage{graphicx}
\usepackage{url}
\usetikzlibrary{calc,backgrounds}

\makeatletter
\let\@afterindentfalse\@afterindenttrue
\@afterindentfalse
\makeatother

\newtheorem{theorem}{Theorem}[section]
\newtheorem{lemma}[theorem]{Lemma}
\newtheorem{proposition}[theorem]{Proposition}
\newtheorem{corollary}[theorem]{Corollary}
\theoremstyle{definition}
\newtheorem{definition}{Definition}
\newtheorem{remark}{Remark}
\newtheorem{example}[theorem]{Example}

\numberwithin{equation}{section}

\def\C{\mathcal{C}}
\def\K{\mathbb{K}}

\newcommand{\pd}{\operatorname{pd}}
\newcommand{\reg}{\operatorname{reg}}

\begin{document}

\title[Complements of closed powers of cycles]
{Homological invariants of Edge Ideals associated to   powers of cycles}

	\author{Shahnawaz Ahmad Rather}
\address{Department of Mathematics \\
	Government Degree College  \\
	Baramulla \\
	Jammu and Kashmir 193101, India}
\email{nawaaz315@gmail.com}

	\author{S. Pirzada}
\address{Department of Mathematics \\
	University of Kashmir, Hazratbal, \\
	Jammu and Kashmir 190006, India}
\email{pirzadasd@kashmiruniversity.net}
	\author{M. Aijaz}
\address{Department of Mathematics \\
	Government Degree College for Women  \\
	Baramulla \\
	Jammu and Kashmir 193101, India}
\email{ahaijaz99@gmail.com}

\keywords{edge ideals, closed powers of cycles, graded Betti numbers, regularity,
projective dimension, independence complexes}
\subjclass[2020]{13D02, 13D40, 05C76, 05E40}

\begin{abstract}
	Let $G_{n,m}=\overline{\C_n^{[m]}}$, where $\C_n^{[m]}$ denotes the
	closed $m$th power of the $n$-cycle. We study the graded Betti numbers
	and homological invariants of the edge ring of $G_{n,m}$ in the range
	$n\geq 3m+1$. These graphs form a natural family for the study of edge rings whose
	regularity can be compared explicitly with the induced matching number.
	In particular, for $n\geq4m+1$, the graph $G_{n,m}$ has induced matching
	number one, whereas its edge ring has regularity two.
	Our approach is based on a characterization of the homology of the
	induced subcomplexes of the independence complex
	$\Delta(G_{n,m})$. We introduce a family
	$\mathcal{S}_V(k,m)$ of vertex subsets characterized by their successive
	gaps around the cycle and show that, for $W\in\mathcal{S}_V(k,m)$, the
	induced subcomplex $\Delta[W]$ has the homotopy type of $\mathbb{S}^1$,
	whereas for $W\notin\mathcal{S}_V(k,m)$ all its positive-dimensional
	reduced homology groups vanish. Combining this characterization with
	Hochster's formula and an explicit enumeration of
	$\mathcal{S}_V(k,m)$, we obtain a closed formula for the graded Betti
	numbers in the second strand. We further determine the extremal Betti
	number, regularity, and projective dimension of the edge ring of $G_{n,m}$. Finally, we
	compute the $f$- and $h$-vectors of the independence complex and use
	the Hilbert series to determine the graded Betti numbers in the linear
	strand. The case $m=2$ recovers the corresponding results for complements of
	squares of cycles obtained in~\cite{RatherSquare}.
\end{abstract}

\maketitle

\section{Introduction}
\noindent
Let $R=\K[x_1,\ldots,x_n]$ be a polynomial ring over a field $\K$, and
let $G$ be a finite simple graph on the vertex set $\{1,\ldots,n\}$.
The edge ideal of $G$ is the square-free quadratic monomial ideal
\[
I(G)=\langle x_ix_j~|~\{i,j\}\in E_G\rangle\subseteq R.
\]
Throughout this paper, we refer to the quotient $R/I(G)$ as the edge
ring of $G$. The study of the minimal graded free resolution of
$R/I(G)$ provides a natural connection between the combinatorial
structure of $G$ and homological invariants such as its graded Betti
numbers, Castelnuovo--Mumford regularity and projective dimension.
Determining these invariants in terms of the combinatorial structure of
the underlying graph is a fundamental problem in the study of edge
ideals. Explicit formulas are known for several structured classes of
graphs; see, for example,
\cite{hatu,hawo,RatherCrown,RatherTriangular,vill,wood}.

An important problem in this direction is to understand the
Castelnuovo--Mumford regularity of edge rings. Fr\"oberg~\cite[Theorem~1]{frob}
characterized the graphs whose edge rings have regularity one: namely,
$\reg(G)=1$ if and only if $G$ is co-chordal. A complete combinatorial
characterization of graphs whose edge rings have regularity two is not
known, although important results are available for special classes;
see, for instance, Fern\'andez-Ramos and
Gimenez~\cite[Theorem~3.1]{fera}. On the other hand,
Katzman's description of the Betti numbers
$\beta_{i,2i}(G)$~\cite[Lemma~2.2]{katzman} implies that \( \reg(G)\geq\operatorname{im}(G), \)
where $\operatorname{im}(G)$ denotes the induced matching number of $G$.
It is therefore natural to study structured families of graphs for
which the relationship between regularity and induced matching number
can be determined explicitly, particularly when the inequality is
strict.

In this paper, we consider such a family arising from powers of cycles.
We work throughout the main part of the paper in the range
$n\geq3m+1$. Within this range, the induced matching number depends on
$n$: for $3m+1\leq n\leq4m$, the graph $G_{n,m}$ has induced matching
number two, whereas for $n\geq4m+1$ it has induced matching number one.
In contrast, we show that, we show that
\[
\reg(G_{n,m})=2
\]
throughout the entire range $n\geq3m+1$. Thus the subfamily
$n\geq4m+1$ provides a natural class of graphs for which the regularity
is strictly greater than the induced matching number.

The family $G_{n,m}$ is also particularly well suited to to an analysis
via  Hochster's
formula~\cite{hoch}, which expresses the graded Betti numbers of a
Stanley--Reisner ring in terms of the reduced homology of induced
subcomplexes. Indeed, an independent set of $G_{n,m}$ is precisely a clique
of $\C_n^{[m]}$, and therefore
\[
\Delta(G_{n,m})=\operatorname{Cl}(\C_n^{[m]}).
\]
Hence the computation of the graded Betti numbers of $R/I(G_{n,m})$
can be translated into the study of the reduced homology of induced
clique complexes of powers of cycles. Clique complexes of powers of cycles have been studied from a
topological point of view. In particular, Adamaszek~\cite{Adamaszek}
obtained homotopy-type results in terms of the order of the cycle and
its power. However, the
computation of graded Betti numbers requires more than the topology of
the full clique complex: Hochster's formula involves the reduced
homology of every induced subcomplex.

To obtain the required control over these induced subcomplexes, we
introduce a family $\mathcal{S}_V(k,m)$ of $k$-subsets of the vertex
set defined in terms of their successive gaps around the cycle.
We prove that
\[
W\in\mathcal{S}_V(k,m)
\Longrightarrow
\Delta[W]\simeq\mathbb S^1,
\]
whereas
\[
W\notin\mathcal{S}_V(k,m)
\Longrightarrow
\widetilde H_r(\Delta[W];\K)=0
~\text{for every }r>0.
\]

Thus the positive-dimensional homology of the induced subcomplexes is
determined entirely by a combinatorial condition on the successive
gaps of the vertices of $W$. Combining this characterization with
Hochster's formula and an explicit enumeration of
$\mathcal{S}_V(k,m)$, we obtain a closed formula for the graded Betti
numbers $\beta_{i,i+2}(G_{n,m})$. As consequences of this formula, we determine the extremal Betti
number, regularity and projective dimension of $G_{n,m}$.

The case $m=2$ specializes to complements of squares of cycles, studied
in our earlier work~\cite{RatherSquare}. The passage from $m=2$ to
arbitrary $m$ is not merely formal: for general $m$, the relevant clique
complexes are higher-dimensional, and the structure of their facets and
their intersections becomes considerably more involved. We describe
these facets in terms of successive gap sequences and use the Nerve
Lemma~\cite{Kozlov} to determine the homotopy type of the relevant
induced subcomplexes. In addition to the
second strand, we determine the $f$- and $h$-vectors of
$\Delta(G_{n,m})$ and use its Hilbert series to compute the graded
Betti numbers in the linear strand. Thus, in the range $n\geq3m+1$, all nontrivial graded Betti numbers of
$R/I(G_{n,m})$ are determined explicitly.

The paper is organized as follows. In Section~2, we recall the graph
theoretic, simplicial and homological notions used throughout the
paper. In Section~3, we first determine the induced matching number of
$G_{n,m}$ and then introduce the family $\mathcal{S}_V(k,m)$, determine
its cardinality and study the reduced homology of the corresponding
induced subcomplexes. We then compute the graded Betti
numbers in the second strand and determine the regularity and
projective dimension of $G_{n,m}$. In Section~4, we determine the $f$- and $h$-vectors of the independence
complex, use its Hilbert series to obtain the graded Betti numbers in
the linear strand, and conclude with a worked example illustrating the
formulas obtained in the paper.
\section{Preliminaries}
\noindent
Throughout this paper, $\K$ denotes a field and
$R=\K[x_1,\ldots,x_n]$ is the the standard graded polynomial ring.

Let $G=(V_G,E_G)$ be a finite simple graph. For $W\subseteq V_G$, the
induced subgraph of $G$ on $W$, denoted by $G[W]$, is the graph with vertex
set $W$ and edge set \[ E_{G[W]}=\{\{u,v\}\in E_G~|~u,v\in W\}. \]
The complement of $G$, denoted by $\overline G$, is the graph with vertex
set $V_G$ in which two distinct vertices are adjacent precisely when they
are not adjacent in $G$.

A path of length $\ell$ joining vertices $u$ and $v$ in $G$ is a
sequence \( u=u_0,u_1,\ldots,u_\ell=v \)
such that $\{u_{j-1},u_j\}\in E_G$ for $1\leq j\leq\ell$. If $G$ is
connected, the distance between $u$ and $v$, denoted by
$\mathrm d_G(u,v)$, is the minimum length of a path joining them. The
diameter of $G$ is
\[
\operatorname{diam}(G)
=
\max\{\mathrm d_G(u,v)\mid u,v\in V_G\}.
\] 

A subset $W\subseteq V_G$ is called a \emph{clique} of $G$ if every
two distinct vertices of $W$ are adjacent in $G$. On the other hand, a subset
$F\subseteq V_G$ is called \emph{independent} if no two distinct
vertices of $F$ are adjacent in $G$. 

Let $\C_n$ denote the cycle on the vertex set
$V=\{1,2,\ldots,n\}$, with the vertices arranged in cyclic order.
Whenever a vertex is indexed by an integer outside
$\{1,2,\ldots,n\}$, its index is reduced modulo $n$, with residue $0$
represented by $n$.
For example, $n+1$ represents the vertex $1$, $n+2$ represents the
vertex $2$, and $0$ represents the vertex $n$.

For vertices $a,b\in V$, the \emph{positively oriented arc} of $\C_n$
from $a$ to $b$ is the sequence of consecutive vertices encountered
when moving from $a$ to $b$ in the positive direction around the cycle.
Its length is the number of edges contained in this sequence. Thus, if $b\equiv a+r\pmod n$ with $0\leq r\leq n-1$, then the positively oriented arc from $a$ to $b$ has length $r$ and consists of the vertices $a,a+1,\ldots,a+r$,
where the indices are understood modulo $n$. More generally, by an \emph{arc of $\C_n$} we mean a connected subpath
of the cycle obtained by choosing one of the two directions between two
vertices. The length of an arc is the number of edges it contains.

\begin{definition}
	Let $G=(V_G,E_G)$ be a connected finite simple graph and let $m$ be a
	positive integer. The \emph{open $m$th power} of $G$, denoted by
	$G^{(m)}$, is the graph on $V_G$ in which two distinct vertices $u$
	and $v$ are adjacent if and only if
	\( \mathrm d_G(u,v)=m. \)
	The \emph{closed $m$th power} of $G$, denoted by $G^{[m]}$, is the
	graph on $V_G$ in which two distinct vertices $u$ and $v$ are adjacent
	if and only if
	\( \mathrm d_G(u,v)\leq m. \)
\end{definition}
For $i,j\in V(\C_n)$, the cyclic distance between $i$ and $j$ is defined by
\[
\mathrm d_{\C_n}(i,j)
=
\min\{|i-j|,\,n-|i-j|\}.
\]
Thus two distinct vertices $i$ and $j$ are adjacent in
$\C_n^{[m]}$ if and only if \(\mathrm d_{\C_n}(i,j)\leq m.\)
Throughout this paper, we study the complement of $\C_n^{[m]}$ and write \( G_{n,m}=\overline{\C_n^{[m]}}. \)
Hence \( V(G_{n,m})=\{1,2,\ldots,n\}, \)
and two distinct vertices $i$ and $j$ are adjacent in $G_{n,m}$ if
and only if \( \mathrm d_{\C_n}(i,j)>m. \)

When $m<n/2$, each vertex of $\C_n^{[m]}$ is adjacent to the $m$
vertices immediately preceding it and the $m$ vertices immediately
following it around the cycle. Hence, $G_{n,m}$ is
$(n-2m-1)$-regular and
\[
|E(G_{n,m})|
=
\frac{n(n-2m-1)}{2}.
\]
Moreover, cyclic rotations preserve adjacency, and hence $G_{n,m}$ is
vertex-transitive.

\begin{remark}
	Since \( 	\operatorname{diam}(\C_n)=\left\lfloor\frac{n}{2}\right\rfloor, \)
	we have $\C_n^{[m]}=K_n$ whenever
	$m\geq\lfloor n/2\rfloor$, where $K_n$ denotes the complete graph on
	$n$ vertices. Throughout our main results, we assume $n\geq3m+1$.
	This condition will be used in Section~3 to control the structure of
	the clique complexes arising from induced subgraphs of
	$\C_n^{[m]}$. The stronger condition $n\geq4m+1$ will arise only in
	connection with the induced matching number of $G_{n,m}$.
\end{remark}

%
\begin{definition}
	Let $V$ be a finite set. A \emph{simplicial complex} $\Delta$ on $V$
	is a collection of subsets of $V$ satisfying the following conditions:
	\begin{enumerate}
		\item $\{v\}\in\Delta$ for every $v\in V$;
		\item if $F\in\Delta$ and $G\subseteq F$, then $G\in\Delta$.
	\end{enumerate}
	The elements of $\Delta$ are called \emph{faces}. A face that is maximal
	with respect to inclusion is called a \emph{facet} of $\Delta$.
	The dimension of a face $F$ is defined by \( \dim F=|F|-1, \)
	and the dimension of $\Delta$ is
	\[
	\dim\Delta=\max\{\dim F~|~F\in\Delta\}.
	\]
\end{definition}
For $W\subseteq V$, the \emph{induced subcomplex} of $\Delta$ on $W$
is
\[
\Delta[W]=\{F\in\Delta\mid F\subseteq W\}.
\]

The \emph{independence complex} of a graph $G$, denoted by
$\Delta(G)$, is the simplicial complex whose faces are the independent
sets of $G$, whereas the \emph{clique complex}, denoted by
$\operatorname{Cl}(G)$, is the simplicial complex whose faces are the
cliques of $G$.

Since $G_{n,m}=\overline{\C_n^{[m]}}$, a subset of vertices is
independent in $G_{n,m}$ if and only if it is a clique in
$\C_n^{[m]}$. Consequently,
\[
\Delta(G_{n,m})
=
\operatorname{Cl}(\C_n^{[m]}).
\]

The graph $G_{12,3}=\overline{\C_{12}^{[3]}}$
and its independence complex are illustrated in Figure~\ref{fig:C12third}. Note that Its facets are
\[
F_i=\{i,i+1,i+2,i+3\},
\]
where \( 1\leq i\leq12 \) and the indices are taken modulo $12$.

\begin{figure}[htb]
	\centering
	\resizebox{\textwidth}{!}{%
		\begin{tikzpicture}[
			vertex/.style={
				circle,
				draw,
				fill=white,
				minimum size=4.5mm,
				inner sep=0pt,
				font=\scriptsize
			},
			edge/.style={line width=.45pt}
			]
			
			
			\begin{scope}[xshift=-4.2cm, scale=1.15]
				
				\def\R{2.15}
				
				\foreach \i in {1,...,12}
				{
					\pgfmathsetmacro{\ang}{90-(\i-1)*30}
					\node[vertex] (g\i) at (\ang:\R) {\i};
				}
				
				\foreach \i in {1,...,12}
				{
					\pgfmathtruncatemacro{\j}{mod(\i+3,12)+1}
					\draw[edge] (g\i)--(g\j);
				}
				
				\foreach \i in {1,...,12}
				{
					\pgfmathtruncatemacro{\j}{mod(\i+4,12)+1}
					\draw[edge] (g\i)--(g\j);
				}
				
				\foreach \i in {1,...,6}
				{
					\pgfmathtruncatemacro{\j}{\i+6}
					\draw[edge] (g\i)--(g\j);
				}
				
				\node at (0,-2.85)
				{(a) $G_{12,3}=\overline{\C_{12}^{[3]}}$};
				
			\end{scope}
			
			
			\begin{scope}[xshift=4cm]
				
				\tikzset{
					vtx/.style={
						circle,
						draw,
						fill=white,
						minimum size=4.5mm,
						inner sep=0pt,
						font=\scriptsize
					},
					bedge/.style={line width=.45pt},
					iedge/.style={black,line width=.45pt},
					dedge/.style={black,dashed,line width=.45pt}
				}
				
				%
				%
				
				\coordinate (v1)  at (-0.95, 2.45);
				\coordinate (v2)  at ( 0.00, 1.45);
				\coordinate (v3)  at ( 0.95, 2.45);
				
				\coordinate (v4)  at ( 2.45, 0.95);
				\coordinate (v5)  at ( 1.45, 0.00);
				\coordinate (v6)  at ( 2.45,-0.95);
				
				\coordinate (v7)  at ( 0.95,-2.45);
				\coordinate (v8)  at ( 0.00,-1.45);
				\coordinate (v9)  at (-0.95,-2.45);
				
				\coordinate (v10) at (-2.45,-0.95);
				\coordinate (v11) at (-1.45, 0.00);
				\coordinate (v12) at (-2.45, 0.95);

				
				\fill[gray!10]
				(v1)--(v3)--(v4)--(v6)--(v5)--(v2)--cycle;
				
				\fill[gray!14]
				(v4)--(v6)--(v7)--(v9)--(v8)--(v5)--cycle;
				
				\fill[gray!10]
				(v7)--(v9)--(v10)--(v12)--(v11)--(v8)--cycle;
				
				\fill[gray!14]
				(v10)--(v12)--(v1)--(v3)--(v2)--(v11)--cycle;

				
				\fill[white]
				(v2)--(v5)--(v8)--(v11)--cycle;
				
				\draw[bedge]
				(v2)--(v5)--(v8)--(v11)--cycle;

				
				\draw[iedge] (v1)--(v2)--(v3)--(v1);
				\draw[iedge] (v1)--(v4);
				\draw[iedge] (v2)--(v4);
				\draw[iedge] (v3)--(v4);
				
				\draw[iedge] (v2)--(v3);
				\draw[iedge] (v3)--(v4);
				\draw[iedge] (v4)--(v5);
				\draw[iedge] (v2)--(v5);
				\draw[dedge] (v3)--(v5);
				\draw[dedge] (v2)--(v4);
				
				\draw[iedge] (v3)--(v4);
				\draw[iedge] (v4)--(v5)--(v6)--(v4);
				\draw[iedge] (v3)--(v6);
				\draw[dedge] (v3)--(v5);

				
				\draw[iedge] (v4)--(v5)--(v6)--(v4);
				\draw[iedge] (v4)--(v7);
				\draw[iedge] (v5)--(v7);
				\draw[iedge] (v6)--(v7);
				
				\draw[iedge] (v6)--(v7);
				\draw[iedge] (v7)--(v8);
				\draw[iedge] (v5)--(v8);
				\draw[dedge] (v6)--(v8);
				\draw[dedge] (v5)--(v7);
				
				\draw[iedge] (v7)--(v8)--(v9)--(v7);
				\draw[iedge] (v6)--(v9);
				\draw[dedge] (v6)--(v8);

				
				\draw[iedge] (v7)--(v8)--(v9)--(v7);
				\draw[iedge] (v7)--(v10);
				\draw[iedge] (v8)--(v10);
				\draw[iedge] (v9)--(v10);
				
				\draw[iedge] (v9)--(v10);
				\draw[iedge] (v10)--(v11);
				\draw[iedge] (v8)--(v11);
				\draw[dedge] (v9)--(v11);
				\draw[dedge] (v8)--(v10);
				
				\draw[iedge] (v10)--(v11)--(v12)--(v10);
				\draw[iedge] (v9)--(v12);
				\draw[dedge] (v9)--(v11);

				
				\draw[iedge] (v10)--(v11)--(v12)--(v10);
				\draw[iedge] (v10)--(v1);
				\draw[iedge] (v11)--(v1);
				\draw[iedge] (v12)--(v1);
				
				\draw[iedge] (v12)--(v1);
				\draw[iedge] (v1)--(v2);
				\draw[iedge] (v11)--(v2);
				\draw[dedge] (v12)--(v2);
				\draw[dedge] (v11)--(v1);
				
				\draw[iedge] (v1)--(v2)--(v3)--(v1);
				\draw[iedge] (v12)--(v3);
				\draw[dedge] (v12)--(v2);

				
				\draw[bedge]
				(v1)--(v3)--(v4)--(v6)--(v7)--(v9)--
				(v10)--(v12)--cycle;

				
				\foreach \i in {1,...,12}
				{
					\node[vtx] at (v\i) {\i};
				}

				
				\node[font=\scriptsize, rotate=-45] at (2,2)
				{$F_1,F_2,F_3$};
				
				\node[font=\scriptsize,rotate=-90] at (2.90,-0.05)
				{$F_4,F_5,F_6$};
				
				\node[font=\scriptsize, rotate=-45] at (-2,-2)
				{$F_7,F_8,F_9$};
				
				\node[font=\scriptsize,rotate=90] at (-2.90,0.05)
				{$F_{10},F_{11},F_{12}$};

				\node at (0,-3.4)
				{(b) $\Delta(G_{12,3})$};
				
			\end{scope}
	\end{tikzpicture}}
	\caption{The graph $G_{12,3}=\overline{\C_{12}^{[3]}}$
		and its independence complex.}
	
	\label{fig:C12third}
	
\end{figure}

Let $\Gamma$ be a simplicial complex on the vertex set
$\{1,2,\ldots,n\}$. The Stanley--Reisner ideal of $\Gamma$ is the
square-free monomial ideal
\[
I_{\Gamma}
=
\left\langle
x_{i_1}x_{i_2}\cdots x_{i_s}
\;\middle|\;
\{i_1,i_2,\ldots,i_s\}\notin\Gamma
\right\rangle
\subseteq R.
\]
The quotient ring \( \K[\Gamma]=R/I_{\Gamma} \)
is called the Stanley--Reisner ring of $\Gamma$.


Since the minimal non-faces of the independence complex $\Delta(G)$ are
precisely the edges of $G$, we have \( I_{\Delta(G)}=I(G). \) 
Hence, \( \K[\Delta(G)]\cong R/I(G). \)
As in the Introduction, we refer to $R/I(G)$ as the edge ring of $G$.
 Let $M$ be a finitely generated graded $R$-module. A minimal graded
free resolution of $M$ has the form
\[
0\longrightarrow
\bigoplus_j R(-j)^{\beta_{p,j}(M)}
\longrightarrow\cdots\longrightarrow
\bigoplus_j R(-j)^{\beta_{1,j}(M)}
\longrightarrow
\bigoplus_j R(-j)^{\beta_{0,j}(M)}
\longrightarrow M\longrightarrow0,
\]
where $R(-j)$ denotes the graded free module obtained from $R$ by a
degree shift of $j$, so that $R(-j)_d=R_{d-j}$. The integers
$\beta_{i,j}(M)$ are called the \emph{graded Betti numbers} of $M$;
equivalently, $\beta_{i,j}(M)$ is the number of minimal generators of
degree $j$ in the $i$th free module of the resolution.

The \emph{Castelnuovo--Mumford regularity} and the
\emph{projective dimension} of $M$ are defined, respectively, by
\[
\reg(M)=\max\{j-i:\beta_{i,j}(M)\neq0\}
\]
and
\[
\pd(M)=\max\{i:\beta_{i,j}(M)\neq0
\text{ for some }j\}.
\]
For a graph $G$, we use the shorthand notation \( \beta_{i,j}(G)=\beta_{i,j}(R/I(G)),
~\reg(G)=\reg(R/I(G))\), \(
~\pd(G)=\pd(R/I(G)). \)
For the remainder of the paper, we write
\[
\Delta=\Delta(G_{n,m})
=\operatorname{Cl}(\C_n^{[m]}).
\]

The following standard form of the Nerve Lemma will be used; see,
for example,~\cite{Kozlov}.
\begin{theorem}[Nerve Lemma]\label{thm:nerve}
	Let $\Gamma$ be a simplicial complex and let
	\( \Gamma=\Gamma_1\cup\Gamma_2\cup\cdots\cup\Gamma_s \)
	be a cover by subcomplexes. Suppose that every nonempty finite
	intersection \( \Gamma_{j_1}\cap\Gamma_{j_2}\cap\cdots\cap\Gamma_{j_r} \)
	is contractible. Then $\Gamma$ is homotopy equivalent to the nerve of
	the cover.
\end{theorem}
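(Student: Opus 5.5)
The statement is the Nerve Lemma, which is standard and quoted from~\cite{Kozlov}. I would follow the classical proof, arguing by induction on the number $s$ of subcomplexes. The same induction, run on the nerve side, gives the comparison with the nerve.

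The cleanest route goes through a common refinement. Form the poset $P$ of nonempty index sets $J\subseteq\{1,\dots,s\}$ with $\Gamma_J=\bigcap_{j\in J}\Gamma_j\neq\emptyset$. Its order complex is the barycentric subdivision of the nerve $N$, so it is homeomorphic to $N$. Next build the auxiliary complex
\[
X=\bigcup_{J\in P}\Gamma_J\times|\Delta(P_{\geq J})|
\subseteq|\Gamma|\times|N|,
\]
or its simplicial analogue, the order complex of the poset of pairs $(\sigma,J)$ with $\sigma\in\Gamma_J$. There are two projections, $p_1\colon X\to|\Gamma|$ and $p_2\colon X\to|N|$.

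\emph{The first projection.} The fibre of $p_1$ over a point $x$ in the open simplex $\sigma$ is the full subcomplex of $N$ on the indices $j$ with $\sigma\in\Gamma_j$. This is a nonempty simplex, hence contractible.

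\emph{The second projection.} The fibre of $p_2$ over a point of $|\Delta(P_{\geq J})|$ is, up to the standard deformation, $\Gamma_J$. This is contractible by hypothesis.

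\emph{Conclusion.} Apply Quillen's fiber lemma, or equivalently the fact that a simplicial map with contractible point-preimages over closed cells is a homotopy equivalence, to both projections. This gives $|\Gamma|\simeq X\simeq|N|$.

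The main obstacle is justifying this "contractible fibres implies homotopy equivalence" step rigorously. Here I would use the gluing lemma and induct over the cells of the base, ordered by dimension. Each step pushes out along a cofibration a map that is a homotopy equivalence on both pieces. Since the paper only uses the lemma as a tool, citing \cite{Kozlov} suffices; the sketch above is what I would write if a self-contained proof were required.
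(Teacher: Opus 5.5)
The paper does not prove this statement. It quotes the Nerve Lemma from Kozlov's book and uses it as a black box, twice in Proposition~\ref{prop:circle}: for the cover of $\Delta[W]$ by the simplices $\Sigma_t$, and for the cover of $|\C_n|$ by the arcs $A_t$. So your closing remark that a citation suffices is exactly the paper's route. Your sketch goes further and gives a genuine proof. $X$ is the homotopy colimit of the diagram $J\mapsto\Gamma_J$ over the face poset of the nerve, and comparing its two projections is the standard homotopy-colimit proof. This makes the argument self-contained for arbitrary finite covers by subcomplexes. The citation buys brevity, and the paper's two applications (covers by simplices and by arcs) need far less generality anyway.

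Several points need tightening.

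\textbf{The order on $P$.} You never fix it, and your formula for $X$ depends on it. $|\Delta(P_{\geq J})|$ is the subdivided closed face $|\langle J\rangle|$ of $N$ only when $P$ is ordered by \emph{reverse} inclusion. With the inclusion order you must write $P_{\leq J}$. Otherwise $\Delta(P_{\geq J})$ is the dual block of $J$. The fibre of $p_2$ over a point whose chain has least member $J_1$ is then $\bigcup_{j\in J_1}\Gamma_j$, a union rather than an intersection, and its contractibility is itself an instance of the Nerve Lemma.

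\textbf{The fibres.} With the correct convention no ``standard deformation'' is needed. The fibre of $p_2$ over a point in the open cell of a chain is exactly $\Gamma_{J'}$, where $J'$ is the largest set in the chain. The fibre of $p_1$ over a point of the open simplex $\sigma$ is exactly the closed face of $N$ on $J_\sigma=\{j:\sigma\in\Gamma_j\}$.

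\textbf{The ``main obstacle''.} It disappears in your poset version. Order pairs by $(\sigma,J)\leq(\tau,K)$ iff $\sigma\subseteq\tau$ and $J\supseteq K$.
\begin{itemize}
\item The fibre $\{(\sigma,J):J\supseteq K\}$ of the second projection retracts via $(\sigma,J)\mapsto(\sigma,K)$ onto the face poset of $\Gamma_K$.
\item The fibre $\{(\sigma,J):\sigma\supseteq\tau\}$ of the first projection retracts via $(\sigma,J)\mapsto(\tau,J)$ onto a poset with least element $(\tau,J_\tau)$, where $J_\tau$ is defined as above.
\end{itemize}
So Quillen's fiber lemma applies verbatim. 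In the product model $X$ is not simplicial, so the ``simplicial map'' version you mention does not literally apply. Your skeleton-by-skeleton gluing does work there. $p_2^{-1}(|\langle J\rangle|)$ is $\Gamma_J\times|\langle J\rangle|$ attached to $p_2^{-1}(\partial|\langle J\rangle|)$ along $\Gamma_J\times\partial|\langle J\rangle|$, and symmetrically for $p_1$.

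\textbf{The opening sentence.} Drop the sentence about induction on $s$. You never carry that argument out. It would also require the homotopy equivalences to be compatible with inclusions, which is exactly the bookkeeping the common refinement avoids.
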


For a simplicial complex $\Gamma$,
$\widetilde H_r(\Gamma;\K)$ denotes its $r$th reduced simplicial
homology group with coefficients in $\K$. We recall Hochster's formula~\cite{hoch},
which will be used repeatedly throughout the paper.

\begin{theorem}[Hochster's Formula]\label{hochster}
	Let $\Gamma$ be a simplicial complex on the vertex set
	$V=\{1,2,\ldots,n\}$. Then
	\[
	\beta_{i,j}(\K[\Gamma])
	=
	\sum_{\substack{W\subseteq V\\ |W|=j}}
	\dim_{\K}\widetilde H_{j-i-1}(\Gamma[W];\K).
	\]
\end{theorem}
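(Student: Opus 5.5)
The plan is the standard multigraded Koszul argument: compute $\beta_{i,j}(\K[\Gamma])=\dim_\K \operatorname{Tor}_i^R(\K[\Gamma],\K)_j$ using the Koszul complex $\K[\Gamma]\otimes_R K_\bullet(x_1,\ldots,x_n)$. Everything is $\mathbb{Z}^n$-graded, so $\operatorname{Tor}_i$ splits as a direct sum over multidegrees $\mathbf a\in\mathbb{N}^n$. The proof then has three steps.

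\textbf{Step 1: only squarefree multidegrees contribute.} The ideal $I_\Gamma$ is generated by squarefree monomials. The Taylor resolution of $R/I_\Gamma$ is a $\mathbb{Z}^n$-graded free resolution. Its basis elements have multidegrees equal to lcm's of subsets of the generators, and these are squarefree. A minimal multigraded resolution is a direct summand of the Taylor resolution, so $\operatorname{Tor}_i(\K[\Gamma],\K)_{\mathbf a}=0$ unless $\mathbf a$ is a $0/1$ vector. Such a vector is the indicator of some $W\subseteq V$, and its total degree is $|W|=j$. Hence
\[
\beta_{i,j}=\sum_{|W|=j}\dim_\K\operatorname{Tor}_i(\K[\Gamma],\K)_W .
\]

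\textbf{Step 2: identify the Koszul strand in degree $W$.} A basis of the multidegree-$W$ part of the homological degree-$i$ piece $\K[\Gamma]\otimes\bigwedge^i\K^n$ consists of the elements $x^{W\setminus F}e_F$ with $F\subseteq W$ and $|F|=i$. Such an element is nonzero exactly when $W\setminus F\in\Gamma$, since $x^{W\setminus F}$ is squarefree and survives in $\K[\Gamma]$ iff its support is a face.

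Set $G=W\setminus F$. The correspondence $F\mapsto G$ identifies this piece with the space of $(|W|-i-1)$-dimensional faces of $\Gamma[W]$. Under it, the Koszul differential
\[
e_F\mapsto\sum_{k\in F}\pm x_k e_{F\setminus k}
\]
becomes the coboundary $G\mapsto\sum_{k\in W\setminus G,\;G\cup k\in\Gamma}\pm (G\cup k)$. The empty face appears when $F=W$, so we obtain exactly the augmented cochain complex of $\Gamma[W]$. This gives
\[
\operatorname{Tor}_i(\K[\Gamma],\K)_W\cong\widetilde H^{|W|-i-1}(\Gamma[W];\K).
\]

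\textbf{Step 3: pass to homology.} Over a field, $\widetilde H^{r}\cong\widetilde H_{r}$ by universal coefficients. Substituting into Step 1 gives the formula.

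The main obstacle is the sign bookkeeping in Step 2: one has to check that the Koszul signs (determined by the position of $k$ in $F$) match the simplicial coboundary signs (determined by the position of $k$ in $G\cup k$) up to a consistent global re-signing. I would handle this by fixing the natural order on $W$ and twisting each basis element by $\varepsilon(F,W\setminus F)$, the sign of the shuffle permutation. The shuffle identity $\varepsilon(F\setminus k,G\cup k)=\pm\varepsilon(F,G)$, with a sign depending only on $k$'s positions, then makes the two differentials agree. Even if the signs only agree up to a diagonal change of basis, the homology is unaffected.
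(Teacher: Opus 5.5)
Your argument is correct. The paper does not prove this statement; it quotes Hochster's formula from~\cite{hoch} and uses it only as a tool. What you have written is the standard Koszul-complex proof found in the textbook treatments, so the paper offers no competing route to compare with.

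On the sign issue you flagged, the twist works. Put $G=W\setminus F$ and define
\[
\phi(x^{G}e_F)=\varepsilon(F,G)\,G .
\]
A direct inversion count gives
\[
\phi\circ d=(-1)^{|F|-1}\,\delta\circ\phi .
\]
So after your twist the two differentials agree up to a sign that is constant in each homological degree. Rescaling $\phi$ in homological degree $i$ by $(-1)^{\binom{i}{2}}$ then gives an honest isomorphism of complexes.

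Steps~1 and~3 go through as you state them.
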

Applying Theorem~\ref{hochster} to
$\Delta=\Delta(G_{n,m})$, we obtain
\[
\beta_{i,j}(G_{n,m})
=
\sum_{\substack{W\subseteq V\\ |W|=j}}
\dim_{\K}
\widetilde H_{j-i-1}(\Delta[W];\K).
\]
Thus the computation of the graded Betti numbers of $G_{n,m}$ reduces to
the computation of the reduced homology of the induced subcomplexes of
$\Delta$.

\section{Complements of closed powers of cycles}
\noindent
We begin by determining the induced matching number of $G_{n,m}$,
thereby clarifying the motivation described in the Introduction. We
then introduce the analogue of the family $\mathcal{S}_V(k,2)$ used
for complements of squares of cycles.
\begin{proposition}\label{prop:im}
	Let $n\geq4m+1$. Then \( \operatorname{im}(G_{n,m})=1. \)
\end{proposition}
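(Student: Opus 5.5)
Since $n\geq 4m+1>2m+1$, the graph $G_{n,m}$ is $(n-2m-1)$-regular with $n-2m-1\geq 2m\geq 2$, so it has at least one edge and $\operatorname{im}(G_{n,m})\geq 1$. For the reverse inequality we must show that no two disjoint edges $\{a,b\}$ and $\{c,d\}$ of $G_{n,m}$ form an induced matching. Equivalently, for any such pair, at least one of the four cross pairs $\{a,c\},\{a,d\},\{b,c\},\{b,d\}$ is an edge of $G_{n,m}$, i.e.\ has cyclic distance greater than $m$.

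We argue by contradiction. Suppose all four cross pairs have cyclic distance at most $m$, while $\mathrm d_{\C_n}(a,b)>m$ and $\mathrm d_{\C_n}(c,d)>m$. By the triangle inequality for the cyclic distance $\mathrm d_{\C_n}$ (a graph metric), we get
\[
\mathrm d_{\C_n}(a,b)\leq \mathrm d_{\C_n}(a,c)+\mathrm d_{\C_n}(c,b)\leq 2m .
\]
This alone gives no contradiction, so we must use the geometry of the cycle more carefully. By vertex-transitivity (rotation), assume $c=0$. Then $a,b$ lie in the arc $[-m,m]$ around $c$, and likewise $a,b$ lie within distance $m$ of $d$. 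Since $n\geq 4m+1$, the arc $[-m,m]$ has length $2m<n/2$, so distances between points of this arc coincide with ordinary integer differences. Hence we may write $a,b\in[-m,m]$ with $|a-b|>m$; without loss of generality $a\in[-m,-1]$ and $b\in[1,m]$.

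Now $d$ must be within cyclic distance $m$ of both $a$ and $b$, and at cyclic distance greater than $m$ from $c=0$. The points within distance $m$ of $a$ form the arc $[a-m,a+m]$, which lies in $[-2m,m-1]$. The points within distance $m$ of $b$ form the arc $[b-m,b+m]\subseteq[1-m,2m]$. Because $n\geq4m+1$, the union $[-2m,2m]$ has $4m+1\leq n$ points, so these arcs do not wrap around and meet again on the far side of the cycle. Their intersection is therefore the integer interval $[b-m,a+m]$. Since $b-m\geq 1-m>-m$ and $a+m\leq m-1<m$, this interval lies inside $[-m,m]$, so every point in it is within distance $m$ of $c=0$. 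This contradicts $\mathrm d_{\C_n}(c,d)>m$.

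The main obstacle is the non-wrapping step. We need $n\geq 4m+1$ precisely so that the two neighbourhood arcs, of total span $4m+1$, intersect only in a single interval and not also around the back of the cycle. This step must be stated carefully with the modular convention. Everything else is routine interval arithmetic.
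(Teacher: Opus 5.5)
Your proof is correct and rests on the same idea as the paper's: the common $m$-neighbourhood of $a$ and $b$ in $\C_n^{[m]}$ is an arc of length $2m-\mathrm d_{\C_n}(a,b)\leq m-1$, and this forces $\mathrm d_{\C_n}(c,d)\leq m-1$. The difference is only in how wrap-around is excluded: you normalize $c=0$ and use that the projection is injective on the window $[-2m,2m]$ (since $4m+1\leq n$), whereas the paper rules out the longer arc between $a$ and $b$ because its length exceeds $2m$.
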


\begin{proof}
	Since $G_{n,m}$ has an edge, we have
	$\operatorname{im}(G_{n,m})\geq1$. Suppose, contrary to the assertion,
	that
	\[
	\{a,b\},\{c,d\}\in E(G_{n,m})
	\]
	form an induced matching. Since
	$G_{n,m}=\overline{\C_n^{[m]}}$, we have \( \mathrm d_{\C_n}(a,b)>m \) and  \( \mathrm d_{\C_n}(c,d)>m. \)On the other hand, there are no edges of $G_{n,m}$ joining an endpoint
	of $\{a,b\}$ to an endpoint of $\{c,d\}$. Hence
	\[
	\mathrm d_{\C_n}(a,c),
	\mathrm d_{\C_n}(a,d),
	\mathrm d_{\C_n}(b,c),
	\mathrm d_{\C_n}(b,d)
	\leq m.
	\]
	
	Let \( s=\mathrm d_{\C_n}(a,b). \)
	Since $s>m$, we have $s\geq m+1$. Moreover,
	$s\leq\lfloor n/2\rfloor$. Because $n\geq4m+1$, the longer arc between
	$a$ and $b$ has length
	\[
	n-s\geq\left\lceil\frac n2\right\rceil>2m.
	\]
Since the longer arc joining $a$ and $b$ has length greater than
$2m$, no vertex in its interior can be within cyclic distance at most
$m$ from both $a$ and $b$. Hence, every vertex lying within
distance at most $m$ from both $a$ and $b$ must lie on the shorter arc
joining $a$ and $b$.
	
	Along this shorter arc, the vertices whose distances from both $a$ and
	$b$ are at most $m$ form an arc of length \( 2m-s. \) Since $s\geq m+1$, we have \( 2m-s\leq m-1. \)
	Both $c$ and $d$ belong to this arc. Therefore
	\[
	\mathrm d_{\C_n}(c,d)\leq m-1<m,
	\]
	contradicting the fact that $\{c,d\}\in E(G_{n,m})$. Hence $G_{n,m}$ contains no induced matching of size two, and therefore
	\( \operatorname{im}(G_{n,m})=1. \)
\end{proof}
\begin{remark}\label{rem:im}
	The bound $n\geq4m+1$ in Proposition~\ref{prop:im} is sharp. Indeed, suppose \( 3m+1\leq n\leq4m \)
	and write \( n=3m+r, \) where \( 1\leq r\leq m. \)

	Consider the four vertices
	\(
	1,~ r+1,~ m+r+1,\) and \(2m+r+1.
	\)
	The pairs
	\(
	\{1,m+r+1\}
	~\text{and}~
	\{r+1,2m+r+1\}
	\)
	are edges of $G_{n,m}$, since the cyclic distance between the vertices
	of each pair is greater than $m$. On the other hand, each of the four
	cross-pairs has cyclic distance at most $m$. Hence these two edges form
	an induced matching in $G_{n,m}$.
	Thus, \( \operatorname{im}(G_{n,m})\geq2 \)
	whenever \( 3m+1\leq n\leq4m \).

\end{remark}

\begin{definition}\label{def:SV}
	Let $1\leq k\leq n$ and let \( V=\{1,2,\ldots,n\}. \)
	We define $\mathcal{S}_V(k,m)$ to be the collection of all $k$-element
	subsets
	\[
	W=\{i_1,i_2,\ldots,i_k\}\subseteq V,
	\]
	where \( 1\leq i_1<i_2<\cdots<i_k\leq n, \) such that \( 1\leq i_{t+1}-i_t\leq m,~ 1\leq t\leq k-1 \)
	and \( 1\leq n+i_1-i_k\leq m. \)
	Equivalently, if $i_{k+1}=i_1$, then for each
	$1\leq t\leq k$ there exists a unique integer
	$r_t\in\{1,2,\ldots,m\}$ such that
	\[
	i_{t+1}\equiv i_t+r_t\pmod n.
	\]
\end{definition}

\begin{remark}\label{rem:gaps}
Let \( W=\{i_1,i_2,\ldots,i_k\}\in\mathcal{S}_V(k,m), \) where \( 1\leq i_1<i_2<\cdots<i_k\leq n. \) By Definition~\ref{def:SV}, the positive gap from each $i_t$ to the
successive vertex $i_{t+1}$ of $W$, measured in the positive direction
around $\C_n$, is at most $m$. For $1\leq t\leq k-1$, this gap is \( r_t=i_{t+1}-i_t, \)
whereas the gap from $i_k$ back to $i_1$ is \( r_k=n+i_1-i_k. \)
Thus \( 1\leq r_t\leq m, \)
for all \( 1\leq t\leq k. \)	

	The integers $r_1,r_2,\ldots,r_k$ measure the successive gaps between the
	vertices of $W$. Starting from $i_1$ and successively moving through
	$i_2,i_3,\ldots,i_k$ and then back to $i_1$ amounts to traversing the
	whole cycle exactly once. Hence the sum of these gaps is $n$. Indeed,
	\[
	\begin{aligned}
		r_1+r_2+\cdots+r_k
		=(i_2-i_1)+(i_3-i_2)+\cdots+(i_k-i_{k-1})
		+(n+i_1-i_k)
		=n.
	\end{aligned}
	\]
	Since each $r_t\leq m$, we also have
	\[
	n=r_1+r_2+\cdots+r_k\leq
	\underbrace{m+m+\cdots+m}_{k\text{ times}}=mk.
	\]
It follows that, \( \mathcal{S}_V(k,m)=\varnothing \) whenever \( k<\left\lceil\frac{n}{m}\right\rceil. \)
Thus, although Definition~\ref{def:SV} is meaningful for every
$1\leq k\leq n$, the family can be nonempty only if
\( k\geq\left\lceil\frac{n}{m}\right\rceil. \)
\end{remark}

\begin{remark}\label{rem:binary}
For $W\subseteq V$, define its characteristic sequence
$s_W=(s_1,\ldots,s_n)\in\{0,1\}^n$ by
\[
s_j=
\begin{cases}
1,&j\in W,\\
0,&j\notin W.
\end{cases}
\]
Then $W\in\mathcal{S}_V(k,m)$ if and only if $|\operatorname{supp}(s_W)|=k$
and there is no $j\in\{1,\ldots,n\}$ for which
\[
s_j=s_{j+1}=\cdots=s_{j+m-1}=0,
\]
where the subscripts are taken modulo $n$. Indeed, if $r_t$ is the gap between two successive vertices of $W$, then
there are exactly $r_t-1$ consecutive zero entries between the
corresponding two entries equal to $1$. Thus the condition $r_t\leq m$
is equivalent to saying that no block of $m$ consecutive entries of
$s_W$ consists entirely of zeros.
\end{remark}

\begin{lemma}\label{lem:rotation}
	Let $W\subseteq V=\{1,2,\ldots,n\}$, and let \( s_W=(s_1,s_2,\ldots,s_n) \) be its characteristic sequence. Suppose that \( 	s_{W'}
	=
	(s_{\ell},s_{\ell+1},\ldots,s_n,s_1,\ldots,s_{\ell-1}) \)
	is a rotation of $s_W$ for some $1\leq \ell\leq n$. Then
	\[
	\Delta[W']\cong\Delta[W].
	\]
	In particular, \( 	\widetilde H_r(\Delta[W'];\K)
	\cong
	\widetilde H_r(\Delta[W];\K) \)
	for every $r$.
\end{lemma}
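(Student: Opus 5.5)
The plan is to exhibit an explicit simplicial isomorphism induced by a rotation of the cycle. Define $\sigma\colon V\to V$ by $\sigma(j)=j-\ell+1$, with indices reduced modulo $n$ as in the conventions of Section~2. By hypothesis the $j$th entry of $s_{W'}$ is $s_{j+\ell-1}$. Hence $j\in W'$ if and only if $j+\ell-1\in W$, which says exactly that $W'=\sigma(W)$. So $\sigma$ restricts to a bijection $W\to W'$.

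The key step is that $\sigma$ is an automorphism of $\C_n^{[m]}$. The cyclic distance $\mathrm d_{\C_n}(i,j)=\min\{|i-j|,n-|i-j|\}$ depends only on $i-j \pmod n$, and $\sigma$ preserves differences modulo $n$. Therefore $\mathrm d_{\C_n}(\sigma(i),\sigma(j))=\mathrm d_{\C_n}(i,j)$, and $\sigma$ preserves adjacency in $\C_n^{[m]}$; this is the vertex-transitivity already noted in Section~2. Consequently $\sigma$ maps cliques of $\C_n^{[m]}$ to cliques, and $\sigma^{-1}$, which is also a rotation, does the same.

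Since $\Delta=\operatorname{Cl}(\C_n^{[m]})$, this means $F\in\Delta$ if and only if $\sigma(F)\in\Delta$. Restricting to faces contained in $W$ gives that $F\in\Delta[W]$ if and only if $\sigma(F)\in\Delta[W']$. Thus $\sigma|_W$ is a simplicial isomorphism $\Delta[W]\cong\Delta[W']$, and isomorphic complexes have isomorphic reduced homology in every degree, which gives the second claim.

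There is no real obstacle here. The only point needing care is the index bookkeeping: one must check that the rotation direction matches the stated form of $s_{W'}$, namely that $\sigma$ shifts by $-(\ell-1)$ rather than $+(\ell-1)$. Either direction is a graph automorphism, so even a sign slip would not affect the conclusion.
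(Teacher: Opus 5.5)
Your proposal is correct and is essentially the paper's proof: you use the same map $j\mapsto j-\ell+1 \pmod n$ (the paper's $\rho_\ell$), the same invariance of cyclic distance to make it an automorphism of $\C_n^{[m]}$, and the same restriction to a simplicial isomorphism $\Delta[W]\cong\Delta[W']$. Your explicit check that $W'=\sigma(W)$ and your use of $\sigma^{-1}$ for the reverse inclusion of faces are slightly more detailed than the paper's version.
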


\begin{proof}
	Consider the map
	\[
	\rho_{\ell}:V\longrightarrow V,~
	\rho_{\ell}(j)\equiv j-\ell+1\pmod n.
	\]
	Since cyclic distance is invariant under rotation, we have
	\[
	\mathrm{d}_{\C_n}(u,v)
	=
	\mathrm{d}_{\C_n}\bigl(\rho_{\ell}(u),\rho_{\ell}(v)\bigr)
	\]
	for all $u,v\in V$. Consequently, \( \{u,v\}\in E(\C_n^{[m]}) \) if and only if \( \{\rho_{\ell}(u),\rho_{\ell}(v)\}\in E(\C_n^{[m]}).\) Thus $\rho_{\ell}$ is an automorphism of $\C_n^{[m]}$, and hence also
	of
$G_{n,m}=\overline{\C_n^{[m]}}$.	By the definition of $W'$, the map $\rho_{\ell}$ sends $W$ onto $W'$.
	Therefore its restriction to $W$ induces a simplicial isomorphism
	\[
	\Delta[W]\cong\Delta[W'].
	\]
	It follows immediately that \( \widetilde H_r(\Delta[W];\K)
	\cong
	\widetilde H_r(\Delta[W'];\K) \)
	for every $r$.
\end{proof}

In the following lemma, we shall determine the cardinality of $\mathcal{S}_V(k,m)$.

\begin{lemma}\label{lem:count}
	For \( n\geq k\geq\left\lceil\frac{n}{m}\right\rceil, \)
	we have
	\[
	\left|\mathcal{S}_V(k,m)\right|
	=
	\frac{n}{k}
	[z^{\,n-k}]
	(1+z+\cdots+z^{m-1})^k.
	\]
	Equivalently,
	\[
	\left|\mathcal{S}_V(k,m)\right|
	=
	\frac{n}{k}
	\sum_{q=0}^{\left\lfloor\frac{n-k}{m}\right\rfloor}
	(-1)^q
	\binom{k}{q}
	\binom{n-mq-1}{k-1}.
	\]
\end{lemma}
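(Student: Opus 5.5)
The plan is a double-counting argument on pairs consisting of a set $W\in\mathcal{S}_V(k,m)$ together with a marked element of $W$. Let $\mathcal{P}$ be the set of pairs $(W,i)$ with $W\in\mathcal{S}_V(k,m)$ and $i\in W$. Counting by $W$ first gives $|\mathcal{P}|=k\,|\mathcal{S}_V(k,m)|$.

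Now count by the marked vertex instead. Fix a vertex $i\in V$. Starting at $i$ and moving in the positive direction around $\C_n$, list the successive vertices of $W$ as $i=j_1,j_2,\ldots,j_k$. Their gaps $r_1,\ldots,r_k$ are as in Remark~\ref{rem:gaps}, so $r_t\in\{1,\ldots,m\}$ and $r_1+\cdots+r_k=n$. Conversely, any such composition $(r_1,\ldots,r_k)$ of $n$ determines a unique $k$-set $W=\{i,\,i+r_1,\,i+r_1+r_2,\ldots\}$ (indices mod $n$) containing $i$. This $W$ has exactly $k$ distinct elements, because the partial sums $r_1+\cdots+r_t$ for $t<k$ lie strictly between $0$ and $n$. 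It also satisfies Definition~\ref{def:SV}: after re-sorting, the cyclic gaps are the $r_t$, and the characterisation in Remark~\ref{rem:binary} is rotation invariant. Hence the pairs with marked vertex $i$ are in bijection with compositions of $n$ into $k$ parts in $\{1,\ldots,m\}$. By Lemma~\ref{lem:rotation}-style rotation invariance, this count is the same for every $i$, so $|\mathcal{P}|=n\,c(n,k,m)$, where $c$ is the number of such compositions. The only delicate point is the bijection for a fixed starting vertex, specifically checking injectivity and that the construction is well defined independent of how $W$ is ordered. I expect this to be the main (mild) obstacle, and the fixed starting point $i$ resolves it.

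It then remains to compute $c(n,k,m)$. Substitute $r_t=1+e_t$ with $0\le e_t\le m-1$, so that $\sum e_t=n-k$. This gives $c=[z^{n-k}](1+z+\cdots+z^{m-1})^k$, and hence $|\mathcal{S}_V(k,m)|=\frac{n}{k}[z^{n-k}](1+\cdots+z^{m-1})^k$.

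For the equivalent form, write $(1+\cdots+z^{m-1})^k=(1-z^m)^k(1-z)^{-k}$. Expand $(1-z^m)^k=\sum_q(-1)^q\binom{k}{q}z^{mq}$ and $(1-z)^{-k}=\sum_p\binom{p+k-1}{k-1}z^p$, then extract the coefficient of $z^{n-k}$ by taking $p=n-k-mq$. This yields $\sum_{q}(-1)^q\binom{k}{q}\binom{n-mq-1}{k-1}$, where $q$ ranges over $0\le q\le\lfloor (n-k)/m\rfloor$ because $p\ge0$. The hypothesis $k\ge\lceil n/m\rceil$ only guarantees that the set is nonempty and is not needed for the identity itself.
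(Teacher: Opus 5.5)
Your proposal is correct and follows essentially the same argument as the paper: double count pairs $(W,v)$ with $v\in W$ through the bijection with gap sequences read from a fixed starting vertex, giving $k\,|\mathcal{S}_V(k,m)|=n\,[z^{n-k}](1+z+\cdots+z^{m-1})^k$, and then extract the coefficient from $(1-z^m)^k(1-z)^{-k}$. The appeal to rotation invariance is not needed, since your bijection already shows that each fixed starting vertex contributes exactly $c(n,k,m)$ pairs.
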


\begin{proof}
	Let \(W=\{i_1,i_2,\ldots,i_k\}\in\mathcal{S}_V(k,m),\) where \(1\leq i_1<i_2<\cdots<i_k\leq n.\)
	As observed in Remark~\ref{rem:gaps}, associated to $W$ are the integers \( r_t=i_{t+1}-i_t, 1\leq t\leq k-1, \) and \( r_k=n+i_1-i_k. \)
	They satisfy \(1\leq r_t\leq m\), \( 1\leq t\leq k \) and \( r_1+r_2+\cdots+r_k=n. \)
	
	Put \( a_t=r_t-1 \) for \( 1\leq t\leq k \). Then
	\( 0\leq a_t\leq m-1 \)
	and \( a_1+a_2+\cdots+a_k=n-k. \)
	Thus, we first need to count the ordered $k$-tuples
	$(a_1,a_2,\ldots,a_k)$ satisfying these conditions. For this purpose, introduce an auxiliary variable $z$. Since each $a_t$
	can take any of the values $0,1,\ldots,m-1$, these possibilities are
	represented by the polynomial
	\[
	1+z+z^2+\cdots+z^{m-1}.
	\]
	Hence, the possible ordered $k$-tuples
	$(a_1,\ldots,a_k)$ are represented by
	\( (1+z+z^2+\cdots+z^{m-1})^k. \)
	Indeed, on choosing the term $z^{a_t}$ from the $t$th factor, the
	corresponding term in the product is
	\[
	z^{a_1}z^{a_2}\cdots z^{a_k}
	=
	z^{a_1+a_2+\cdots+a_k}.
	\]
	It follows that the number of ordered $k$-tuples satisfying \( a_1+\cdots+a_k=n-k \)
	is precisely the coefficient of $z^{n-k}$ in \( (1+z+z^2+\cdots+z^{m-1})^k. \)
	If $[z^r]f(z)$ denotes the coefficient of $z^r$ in a polynomial $f(z)$, this number is therefore \( [z^{\,n-k}](1+z+z^2+\cdots+z^{m-1})^k. \)
	
	We now relate these ordered $k$-tuples to the members of
	$\mathcal{S}_V(k,m)$. Fix an ordered $k$-tuple
	\( (a_1,a_2,\ldots,a_k) \) satisfying \( 0\leq a_t\leq m-1\) and \(a_1+\cdots+a_k=n-k, \)
	and choose a vertex $v\in V$. Put $i_1=v$ and define successively
	\[
	i_{t+1}\equiv i_t+a_t+1\pmod n,
	\]
	for all \( 1\leq t\leq k. \) Since each $a_t+1$ is positive and \( \sum_{t=1}^{k}(a_t+1)=n, \)
	we have, for every $1\leq s<k$,
\( 	0<
\sum_{t=1}^{s}(a_t+1)
<n\)

	Therefore, no proper partial sum is congruent to $0$ modulo $n$, and
	hence the vertices \( i_1,i_2,\ldots,i_k \)
	are pairwise distinct. After the $k$th step we return to the initial
	vertex, since
	\[
	i_{k+1}
	\equiv
	i_1+\sum_{t=1}^{k}(a_t+1)
	\equiv i_1\pmod n.
	\]
	Moreover, \( 1\leq a_t+1\leq m \) for every $t$, and therefore the resulting $k$-element set belongs to
	$\mathcal{S}_V(k,m)$. Thus an ordered $k$-tuple $(a_1,\ldots,a_k)$ together with a distinguished
	starting vertex determines a unique pair $(W,v)$, where \( W\in\mathcal{S}_V(k,m) \) and \( v\in W \).
	
	Conversely, every such pair determines its ordered sequence of gaps
	uniquely. Thus this correspondence is a bijection. Therefore, the total number
	of pairs $(W,v)$, where \( W\in\mathcal{S}_V(k,m) \) and \( v\in W \) is \( n[z^{\,n-k}]
	(1+z+z^2+\cdots+z^{m-1})^k. \)
	On the other hand, every set $W\in\mathcal{S}_V(k,m)$ has exactly $k$
	vertices, and any one of these vertices can be chosen as the initial
	vertex. Therefore every $W$ is counted exactly $k$ times. It follows that
	\[
	k\left|\mathcal{S}_V(k,m)\right|
	=
	n[z^{\,n-k}]
	(1+z+z^2+\cdots+z^{m-1})^k,
	\]
	and hence
	\begin{equation}\label{coeff}
		\left|\mathcal{S}_V(k,m)\right|
		=
		\frac{n}{k}
		[z^{\,n-k}]
		(1+z+z^2+\cdots+z^{m-1})^k.
	\end{equation}
	
	It remains to evaluate this coefficient. Note that
	\( 1+z+z^2+\cdots+z^{m-1}
	=
	\frac{1-z^m}{1-z}, \) and hence
	\( 	(1+z+z^2+\cdots+z^{m-1})^k
	=
	(1-z^m)^k(1-z)^{-k}. \) Using the binomial theorem,\( (1-z^m)^k =
	\sum_{q=0}^{k}
	(-1)^q\binom{k}{q}z^{mq}, \) and \( (1-z)^{-k}
	=
	\sum_{\ell=0}^{\infty}
	\binom{k+\ell-1}{k-1}z^\ell. \) Therefore,
	\[
	(1+z+\cdots+z^{m-1})^k
	=
	\sum_{q=0}^{k}\sum_{\ell=0}^{\infty}
	(-1)^q
	\binom{k}{q}
	\binom{k+\ell-1}{k-1}
	z^{mq+\ell}.
	\]
	
	Finally, to obtain the coefficient of $z^{n-k}$, we must have \( mq+\ell=n-k, \) or equivalently,
	\( \ell=n-k-mq. \)
	Since $\ell\geq0$, we necessarily have \( 0\leq q\leq
	\left\lfloor\frac{n-k}{m}\right\rfloor. \)
	Thus
	\[
	[z^{\,n-k}]
	(1+z+\cdots+z^{m-1})^k
	=
	\sum_{q=0}^{\left\lfloor\frac{n-k}{m}\right\rfloor}
	(-1)^q
	\binom{k}{q}
	\binom{n-mq-1}{k-1}.
	\]
	Substituting this into  (\ref{coeff}), we have
	\[
	\left|\mathcal{S}_V(k,m)\right|
	=
	\frac{n}{k}
	\sum_{q=0}^{\left\lfloor\frac{n-k}{m}\right\rfloor}
	(-1)^q
	\binom{k}{q}
	\binom{n-mq-1}{k-1}.
	\]
\end{proof}
\begin{remark}
	For $m=2$, Lemma~\ref{lem:count} gives
	\[
	\left|\mathcal{S}_V(k,2)\right|
	=
	\frac{n}{k}[z^{\,n-k}](1+z)^k
	=
	\frac{n}{k}\binom{k}{n-k}.
	\]
	Using Pascal's identity,
	\[
	\frac{n}{k}\binom{k}{n-k}
	=
	\binom{k}{n-k}
	+
	\binom{k-1}{n-k-1}.
	\]
Hence, for $m=2$, the formula reduces to the corresponding formula
obtained for complements of squares of cycles in~\cite{RatherSquare}.
\end{remark}

\subsection{Homology of the induced subcomplexes}

We now study the reduced homology of the induced subcomplexes
$\Delta[W]$. The argument naturally separates into two cases according
as $W$ belongs to $\mathcal{S}_V(k,m)$ or not. We begin with a basic
observation concerning closed powers of paths, which will be used in
the latter case.

\begin{lemma}\label{lem:pathpower}
	Let $P_s$ be a path on $s$ vertices. Then
	\( \operatorname{Cl}(P_s^{[m]}) \)
	is contractible.
\end{lemma}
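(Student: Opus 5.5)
We argue by induction on $s$, using a cone/collapse argument. Label the vertices of $P_s$ as $1,2,\ldots,s$ in path order, so that $\{u,v\}$ is an edge of $P_s^{[m]}$ exactly when $1\leq|u-v|\leq m$. For $s=1$ the complex is a single point. For $s\geq2$, consider the last vertex $s$ and its closed neighbourhood $N[s]=\{\max(1,s-m),\ldots,s\}$ in $P_s^{[m]}$. This set is a clique, since any two of its elements differ by at most $m$. Consequently every maximal face containing $s$ is contained in $N[s]$, and in fact $N[s]$ is the unique facet of $\operatorname{Cl}(P_s^{[m]})$ containing $s$.

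Next we remove $s$. Write
\[
\operatorname{Cl}(P_s^{[m]})=\operatorname{Cl}(P_{s-1}^{[m]})\cup\langle N[s]\rangle,
\]
where $\langle N[s]\rangle$ is the full simplex on $N[s]$. Here we use that $P_s^{[m]}$ restricted to $\{1,\ldots,s-1\}$ is exactly $P_{s-1}^{[m]}$, since distances along a path do not change when an end vertex is deleted. The intersection of the two pieces is the simplex on $N[s]\setminus\{s\}$, which is nonempty because $s\geq2$, and is therefore contractible.

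Both pieces are contractible: the first by the induction hypothesis, the second because it is a simplex. Gluing two contractible complexes along a contractible subcomplex gives a contractible complex, by the gluing lemma or a Mayer--Vietoris plus van Kampen argument. One can also invoke Theorem~\ref{thm:nerve} with the two-element cover, whose nerve is an edge. Either way, $\operatorname{Cl}(P_s^{[m]})$ is contractible.

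Equivalently, $s$ is dominated by $s-1$, because $N[s]\subseteq N[s-1]$. The complex therefore collapses onto $\operatorname{Cl}(P_{s-1}^{[m]})$ by a sequence of elementary collapses, and this observation alone suffices.

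The only point needing care is the claim that $N[s]$ is a clique, which holds because all its labels lie in an interval of length at most $m$. This is what distinguishes the path from the cycle, where wrap-around produces the $\mathbb S^1$ homology.
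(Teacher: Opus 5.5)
Your proof is correct, but it takes a different route from the paper's. The paper notes that $P_s^{[m]}$ is an interval graph, assigning to vertex $i$ the interval $[i,i+m]$. Hence $P_s^{[m]}$ is chordal and connected, and the paper then cites the standard fact that the clique complex of a connected chordal graph is contractible.

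You instead work directly with the simplicial vertex $s$. Since $N[s]$ is a clique, $\operatorname{Cl}(P_s^{[m]})$ is $\operatorname{Cl}(P_{s-1}^{[m]})$ glued to a simplex along the nonempty simplex on $N[s]\setminus\{s\}$. Induction then finishes the job, via either the Nerve Lemma with a two-set cover or the collapse coming from $N[s]\subseteq N[s-1]$. In effect you prove the cited chordal-graph fact in this special case, using the perfect elimination ordering $s,s-1,\ldots,1$. So your argument is self-contained, whereas the paper's relies on an external theorem.

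What the paper's formulation buys is reusability. In Lemma~\ref{lem:contractible} the paper needs the conclusion for arbitrary induced subgraphs $P^{[m]}[W]$, which are not powers of paths and may be disconnected. There, ``induced subgraphs of interval graphs are interval graphs'' handles the matter in one line. Your induction adapts easily, since the largest element $w$ of $W$ still has clique neighbourhood $W\cap\{w-m,\ldots,w\}$. However, the step where you use $s-1\in N[s]$ to get a nonempty intersection would have to change: when that neighbourhood is just $\{w\}$, you split off $w$ as an isolated component. The conclusion is then contractibility of each component rather than of the whole complex.
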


\begin{proof}
	Label the vertices of $P_s$ by \( 1,2,\ldots,s \)
	in their natural order along the path. To each vertex $i$, associate the closed interval \( I_i=[i,i+m]. \)	For distinct vertices $i$ and $j$, we have
	\[
	I_i\cap I_j\neq\varnothing
	\Longleftrightarrow
	|i-j|\leq m.
	\]
	On the other hand, \( \mathrm{d}_{P_s}(i,j)=|i-j|. \)
	Therefore
	\[
	I_i\cap I_j\neq\varnothing
	\Longleftrightarrow
	\mathrm{d}_{P_s}(i,j)\leq m
	\Longleftrightarrow
	\{i,j\}\in E(P_s^{[m]}).
	\]
	Thus $P_s^{[m]}$ is an interval graph.
	
	Since every interval graph is chordal, $P_s^{[m]}$ is chordal. Moreover,
	$P_s^{[m]}$ is connected. The clique complex of a connected chordal
	graph is contractible. Hence \( \operatorname{Cl}(P_s^{[m]}) \)
	is contractible.
\end{proof}

\begin{lemma}\label{lem:contractible}
	Let \( W=\{i_1,i_2,\ldots,i_k\}\subseteq V. \)
	If \( W\notin\mathcal{S}_V(k,m), \)
	then
	\[
	\widetilde H_r(\Delta[W];\K)=0
	\]
	for every  \( r>0 \)
\end{lemma}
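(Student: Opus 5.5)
The plan is to exploit the single large gap that $W$ must have, and then reduce to an interval graph, exactly as in Lemma~\ref{lem:pathpower}. If $|W|\leq 1$, then $\Delta[W]$ is empty or a point, and there is nothing to prove; so assume $k\geq2$. Write $W=\{i_1<\cdots<i_k\}$ and form the gaps $r_1,\ldots,r_k$ as in Remark~\ref{rem:gaps}. These gaps always satisfy $r_t\geq1$ and $\sum r_t=n$. Hence $W\notin\mathcal{S}_V(k,m)$ means precisely that some gap satisfies $r_t\geq m+1$. Equivalently, by Remark~\ref{rem:binary}, the characteristic sequence $s_W$ contains a cyclic block of at least $m$ consecutive zeros.

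By Lemma~\ref{lem:rotation} we may rotate $s_W$ so that this large gap is the wrap-around gap, that is, $r_k=n+i_1-i_k\geq m+1$. This changes $\Delta[W]$ only up to isomorphism. After the rotation, $W$ is contained in the linear segment $\{i_1,i_1+1,\ldots,i_k\}$, and $i_k-i_1\leq n-m-1$.

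The key step is to show that, on $W$, cyclic adjacency in $\C_n^{[m]}$ coincides with linear adjacency. Take $u<v$ in $W$. Then $\mathrm d_{\C_n}(u,v)=\min\{v-u,\,n-(v-u)\}$. Since $v-u\leq i_k-i_1$, we get
\[
n-(v-u)\geq n-(i_k-i_1)\geq m+1 .
\]
Therefore $\mathrm d_{\C_n}(u,v)\leq m$ if and only if $v-u\leq m$. Consequently the induced subgraph $\C_n^{[m]}[W]$ is the graph on $W$ with $u\sim v$ iff $|u-v|\leq m$. Assigning to each $u\in W$ the interval $[u,u+m]$ realizes this graph as an interval graph, exactly as in the proof of Lemma~\ref{lem:pathpower}. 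Hence $\C_n^{[m]}[W]$ is chordal, and $\Delta[W]=\operatorname{Cl}(\C_n^{[m]}[W])$ because $\Delta$ is a clique complex and induced subcomplexes of clique complexes are clique complexes of induced subgraphs.

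The remaining point, and the only one needing care, is that $\C_n^{[m]}[W]$ may be disconnected, so Lemma~\ref{lem:pathpower} does not apply verbatim. I will argue component by component. Each connected component of a chordal graph is chordal. For a connected chordal graph $H$, I will show $\operatorname{Cl}(H)$ is contractible by induction on the number of vertices, using a simplicial vertex $x$. For our interval graph, $x=\min W$ of that component works, since its neighbours all lie in $[x,x+m]$ and are therefore pairwise adjacent. The link of $x$ in $\operatorname{Cl}(H)$ is then the full simplex on $N(x)$, which is nonempty by connectivity. So the star of $x$ collapses onto the link, and $\operatorname{Cl}(H)\simeq\operatorname{Cl}(H-x)$, where $H-x$ is again a connected interval graph of the same type.

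Thus $\Delta[W]$ is homotopy equivalent to a disjoint union of points, one for each component. Hence $\widetilde H_r(\Delta[W];\K)=0$ for all $r>0$; only $\widetilde H_0$ can survive.
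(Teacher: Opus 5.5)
Your proposal is correct and follows essentially the paper's argument. You rotate so that the long gap becomes the wrap-around gap, identify $\C_n^{[m]}[W]$ with the graph on $W$ given by $|u-v|\leq m$, and conclude that each connected component is an interval graph whose clique complex is contractible. There are two minor differences. You check the adjacency identification by a direct distance computation, whereas the paper argues that no path of length at most $m$ can cross the deleted block of $m$ vertices. You also prove the final contractibility yourself, by deleting the simplicial vertex $\min$ of each component, whereas the paper cites the standard fact that clique complexes of connected chordal graphs are contractible.
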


\begin{proof}
	Since $W\notin\mathcal{S}_V(k,m)$, at least one successive gap between
	vertices of $W$ is greater than $m$. Hence there is a block of at least
	$m$ consecutive vertices of $\C_n$ containing no vertex of $W$.
	By Lemma~\ref{lem:rotation}, after applying a cyclic rotation if
	necessary, we may assume that
	\[
	W\cap\{1,2,\ldots,m\}=\varnothing.
	\]
	
	Delete the vertices $1,2\ldots,m$ from $\C_n$. The remaining vertices \( m+1,m+2,\ldots,n \)
	occur consecutively on a path, which we denote by $P$. We claim that
	for vertices $u,v\in W$, \( \{u,v\}\in E(\C_n^{[m]}) \)
	if and only if $u$ and $v$ are at distance at most $m$ in $P$.
	
	Indeed, any path in $\C_n$ joining two vertices of $W$ and passing
	through the deleted block must enter the block at one end and leave it
	at the other. Since the block contains $m$ consecutive deleted
	vertices, such a path has length at least $m+1$. Therefore no path of
	length at most $m$ joining vertices of $W$ can pass through the deleted
	block. Therefore, whenever \( \mathrm{d}_{\C_n}(u,v)\leq m, \)
	the shortest path from $u$ to $v$ is entirely contained in $P$, and
	hence \( 	\mathrm{d}_{\C_n}(u,v)=\mathrm{d}_P(u,v). \)
	Thus \( \C_n^{[m]}[W]=P^{[m]}[W]. \)
	
	As observed in the proof of Lemma~\ref{lem:pathpower},
	$P^{[m]}$ is an interval graph.
	Every induced subgraph of an interval graph is again an interval graph;
	hence every connected component of $\C_n^{[m]}[W]$ is a connected
	chordal graph. The clique complex of a connected chordal graph is
	contractible. Therefore every connected component of \( \Delta[W]
	=
	\operatorname{Cl}\bigl(\C_n^{[m]}[W]\bigr) \)
	is contractible. It follows that $\Delta[W]$ has no reduced homology in positive
	dimensions. Hence
	\[
	\widetilde H_r(\Delta[W];\K)=0
	\]
	for every \(r>0\).
\end{proof}

\begin{lemma}\label{lem:arc}
	Let $n\geq3m+1$. Then every clique of $\C_n^{[m]}$ is contained in an
	arc of $\C_n$ of length at most $m$.
\end{lemma}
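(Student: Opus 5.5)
We need to show that if $n\geq 3m+1$, every clique $F$ of $\C_n^{[m]}$ lies in an arc of length at most $m$. The plan is to look at the complementary gaps: the vertices of $F$ cut the cycle into $|F|$ positively oriented gaps, and it suffices to find one gap of length at least $n-m$. The arc from the end of that gap around to its start then has length at most $m$ and contains all of $F$.

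The cases $|F|\le 1$ are trivial. If $|F|=2$, say $F=\{a,b\}$, adjacency gives $\mathrm d_{\C_n}(a,b)\le m$. The shorter arc between $a$ and $b$ therefore has length at most $m$ and contains $F$.

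For $|F|\ge 3$, write $F=\{i_1<\cdots<i_k\}$ with gaps $r_1,\dots,r_k$ as in Remark~\ref{rem:gaps}, so $\sum r_t=n$. The key observation is that any two vertices of $F$ are at cyclic distance at most $m$. For consecutive vertices $i_t,i_{t+1}$ this means $\min(r_t,n-r_t)\le m$. So each gap either satisfies $r_t\le m$ (a short gap) or $r_t\ge n-m$ (a long gap).

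At most one gap can be long, since two long gaps would sum to at least $2(n-m)>n$. If some gap is long, we are done. The main obstacle is therefore ruling out the case in which all gaps are short, i.e.\ $W=F$ has all gaps at most $m$, so $F$ ``wraps around'' the cycle.

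To handle this case, I use that $\sum r_t=n\ge 3m+1$. I partition the cyclic sequence of gaps into three consecutive blocks. Starting at $i_1$, let $p$ be the first index at which the partial sum of gaps exceeds $m$. Since each gap is at most $m$, the partial sum at that index is at most $2m$. Hence the vertex $u=i_{p+1}$ satisfies $m<$ (positive arc length from $i_1$ to $u$) $\le 2m$.

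The remaining arc from $u$ back to $i_1$ then has length at least $n-2m\ge m+1$. Consequently $\mathrm d_{\C_n}(i_1,u)=\min(\ell,n-\ell)$, where $\ell\in(m,2m]$ and $n-\ell\ge m+1$. This distance exceeds $m$, contradicting the fact that $i_1,u\in F$ are adjacent. The inequality $n\ge 3m+1$ is used exactly here, to ensure that $n-\ell>m$ when $\ell\le 2m$.

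One also needs $u\neq i_1$, i.e.\ the partial sum does not reach $n$ before exceeding $m$. This holds since $n>m$.

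This contradiction shows that a long gap exists, which completes the argument.
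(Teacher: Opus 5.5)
Your proof is correct, but it takes a different route from the paper's.

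\textbf{The paper's argument.} It fixes a vertex $u\in F$ and lets $p$ and $q$ be the farthest extents of $F$ from $u$ in the positive and negative directions; both are at most $m$. It then applies adjacency to the two extreme vertices $v,w$. The arc between them through $u$ has length $p+q\le 2m$, so the other arc has length $n-(p+q)\ge m+1$, which forces $p+q\le m$. The arc from $w$ through $u$ to $v$ is then the required arc.

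\textbf{Your argument.} You work with the cyclic gaps of $F$ and classify each one as short ($\le m$) or long ($\ge n-m$). You rule out the all-short case by taking the first partial sum $\ell$ that exceeds $m$, so that $m<\ell\le 2m$; the same inequality $n-\ell\ge m+1$ then yields two vertices of $F$ at distance greater than $m$.

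\textbf{Comparison.} Both proofs rest on the same numerical fact: an arc of length at most $2m$ has a complementary arc of length greater than $m$ when $n\ge 3m+1$. The paper's version is local to one vertex and treats all clique sizes in a single argument. Yours is phrased in the gap language of Definition~\ref{def:SV}, and this buys two extra facts. First, a clique with at least two vertices has exactly one gap of length at least $n-m$, so no member of $\mathcal{S}_V(k,m)$ is a clique. Second, it gives directly the uniqueness of the shortest arc containing $F$, which the paper uses later in Lemma~\ref{lem:fvector}.

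\textbf{Minor points.} Your justification of $u\neq i_1$ ``since $n>m$'' is not the right reason. What you need is $\ell\le 2m<n$, which you have already established; equivalently, $p=k$ would force $r_k\ge n-m$. Your separate treatment of $|F|=2$ is unnecessary, because the gap argument covers that case too. The announced ``three consecutive blocks'' are never used; only one partial sum is needed.
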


\begin{proof}
	Fix a vertex $u\in F$. Since $F$ is a clique of $\C_n^{[m]}$, every
	vertex of $F$ is at distance at most $m$ from $u$ in $\C_n$. Therefore,
	each vertex of $F$ lies either within $m$ steps from $u$ in the positive
	direction around the cycle or within $m$ steps from $u$ in the negative
	direction.
	
	Let $p$ be the largest number of steps from $u$ in the positive direction
	to a vertex of $F$, and let $q$ be the largest number of steps from $u$
	in the negative direction to a vertex of $F$. 
	
	Thus
	\( 0\leq p\leq m \) and \( 0\leq q\leq m. \)
	If $p=0$ or $q=0$, then all the vertices of $F$ lie on one side of $u$
	and are therefore contained in an arc of length at most $m$. Now suppose that $p>0$ and $q>0$. Let $v,w\in F$ be vertices such that
	$v$ lies $p$ steps from $u$ in the positive direction and $w$ lies $q$
	steps from $u$ in the negative direction. The two arcs of $\C_n$ joining
	$v$ and $w$ have lengths \( p+q \) and \( n-(p+q). \)
	Since $p,q\leq m$, we have \( p+q\leq 2m. \)
	Using $n\geq3m+1$, it follows that \( 	n-(p+q)
	\geq
	3m+1-2m
	=
	m+1. \) Hence the second arc joining $v$ and $w$ has length strictly greater
	than $m$.
	
	On the other hand, $v$ and $w$ belong to the clique $F$, so they are
	adjacent in $\C_n^{[m]}$. It follows that,
	\( \mathrm{d}_{\C_n}(v,w)\leq m. \)
	Since the arc of length $n-(p+q)$ has length greater than $m$, the
	distance between $v$ and $w$ must be realized by the other arc. Therefore
	\( p+q\leq m. \)
	
	The arc beginning at $w$, passing through $u$, and ending at $v$ has
	length $p+q\leq m$, and by the definitions of $p$ and $q$, it contains
	every vertex of $F$. Hence all the vertices of $F$ lie in an arc of
	$\C_n$ of length at most $m$.
\end{proof}
\begin{lemma}\label{lem:facetsW}
	Let $n\geq3m+1$ and let \( 	W=\{i_1,\ldots,i_k\}\in\mathcal S_V(k,m), \)
	with successive gaps $r_1,\ldots,r_k$. For each $t$, let
	\[
	s_t=\max\left\{1\leq s\leq k-1:
	r_t+r_{t+1}+\cdots+r_{t+s-1}\leq m\right\},
	\]
	where the subscripts are taken modulo $k$, and set
	\( F_t=\{i_t,i_{t+1},\ldots,i_{t+s_t}\}. \)
	Then \( \Delta[W]=\bigcup_{t=1}^k\langle F_t\rangle. \)
	Thus, the facets of $\Delta[W]$ are precisely the
	inclusion-maximal members among $F_1,\ldots,F_k$.
\end{lemma}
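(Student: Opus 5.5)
We prove the two inclusions separately and then read off the facets. Since $\Delta[W]=\operatorname{Cl}(\C_n^{[m]}[W])$ and every subset of a face is a face, it suffices to show two things. First, each $F_t$ is a clique of $\C_n^{[m]}$. Second, every clique $F\subseteq W$ is contained in some $F_t$.

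For the first point, observe that the vertices $i_t,i_{t+1},\ldots,i_{t+s_t}$ lie in order along the positively oriented arc from $i_t$ to $i_{t+s_t}$. This arc has length $r_t+\cdots+r_{t+s_t-1}\leq m$ by the definition of $s_t$. Any two of these vertices are joined by a subarc of length at most $m$, so their cyclic distance is at most $m$, and $F_t$ is a clique. We should also check that $s_t$ is well defined, i.e.\ that the set in its definition is nonempty. This holds because $r_t\leq m$, so $s=1$ always qualifies. The cap $s\leq k-1$ keeps the listed vertices distinct. We will also note in passing that the partial sums can never wrap all the way around, since $n\geq 3m+1>m$.

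For the reverse inclusion, let $F\subseteq W$ be a nonempty clique. By Lemma~\ref{lem:arc}, using $n\geq 3m+1$, $F$ lies in an arc $A$ of $\C_n$ of length at most $m$. Reversing orientation if necessary, we may regard $A$ as positively oriented. Let $i_t$ be the first vertex of $F$ met along $A$. The vertices of $W$ on $A$ from $i_t$ onward are then $i_t,i_{t+1},\ldots,i_{t+j}$ for some $j$, and this list contains every vertex of $F$. The subarc from $i_t$ to $i_{t+j}$ has length $r_t+\cdots+r_{t+j-1}\leq m$. Hence $j\leq s_t$, provided $j\leq k-1$. This proviso holds automatically: an arc of length $\leq m<n$ cannot contain all $k$ vertices of $W$ together with a return to $i_t$. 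Therefore $F\subseteq F_t$, which proves $\Delta[W]=\bigcup_t\langle F_t\rangle$.

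The facet statement then follows formally. Every face lies in some $F_t$, and each $F_t$ is a face, so the maximal faces are exactly the inclusion-maximal $F_t$'s. The only step needing care is the orientation bookkeeping in the second inclusion. One must justify that the $W$-vertices of $A$ from its first $F$-vertex onward form a run of cyclically consecutive elements of $W$ whose gaps are exactly the $r$'s. One must also handle the possible wrap past index $k$ via subscripts mod $k$. This uses only that $A$ is a positively oriented arc shorter than $n$, which Lemma~\ref{lem:arc} provides. If $s_t$ is defined with the upper cap $k-1$ and every $r$-sum of $k-1$ consecutive gaps is still $\leq m$, then $W$ itself would lie in an arc of length $\leq m$. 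That would force $n\leq m+r_{\min}\leq 2m$, contradicting $n\geq 3m+1$, so the cap is never binding in an inconsistent way.
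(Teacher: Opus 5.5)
Your proposal is correct and follows essentially the same route as the paper. You show each $F_t$ is a clique because its vertices lie on a positively oriented arc of length at most $m$, and you then use Lemma~\ref{lem:arc} together with the maximality of $s_t$ to place every clique of $\C_n^{[m]}[W]$ inside some $F_t$. Your extra bookkeeping is a bit more careful than the paper's: you check that the $W$-vertices on the arc form a consecutive run with gaps $r_t,r_{t+1},\ldots$, and that $j\leq k-1$. The paper instead records $k\geq\lceil n/m\rceil\geq 4$, which is what makes $s=1$ admissible in the definition of $s_t$; your claim that $s=1$ always qualifies tacitly uses this.
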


\begin{proof}
Since $W\in\mathcal S_V(k,m)$, Remark~\ref{rem:gaps} gives \( k\geq\left\lceil\frac{n}{m}\right\rceil\geq4. \)
Hence $k-1\geq1$. Moreover, every successive gap satisfies
$1\leq r_t\leq m$, so $s=1$ is admissible in the definition of $s_t$.
Thus $s_t$ is well defined for every $t$.
	We first show that each $F_t$ is a face of $\Delta[W]$. By the
	definition of $s_t$, \( r_t+r_{t+1}+\cdots+r_{t+s_t-1}\leq m. \)
	The left-hand side is the length of the positively oriented arc of
	$\C_n$ beginning at $i_t$ and ending at $i_{t+s_t}$. Thus all the
	vertices of \( F_t=\{i_t,i_{t+1},\ldots,i_{t+s_t}\} \)
	lie on an arc of length at most $m$.
	
	Let $i_{t+p}$ and $i_{t+q}$ be two vertices of $F_t$, where
	$0\leq p<q\leq s_t$. The positively oriented arc from $i_{t+p}$ to
	$i_{t+q}$ has length \( r_{t+p}+r_{t+p+1}+\cdots+r_{t+q-1}, \)
	which is at most \( r_t+r_{t+1}+\cdots+r_{t+s_t-1}\leq m. \)
	Hence \( \mathrm d_{\C_n}(i_{t+p},i_{t+q})\leq m. \)
	Therefore every two vertices of $F_t$ are adjacent in
	$\C_n^{[m]}[W]$, so $F_t$ is a clique of $\C_n^{[m]}[W]$. Since
	\[
	\Delta[W]=\operatorname{Cl}\bigl(\C_n^{[m]}[W]\bigr),
	\]
	it follows that $F_t\in\Delta[W]$.
	
	We now prove the converse inclusion. Let $F\in\Delta[W]$. then $F$ is
	a clique of $\C_n^{[m]}[W]$. By Lemma~\ref{lem:arc}, all the vertices
	of $F$ are contained in an arc of $\C_n$ of length at most $m$.
	
	Let $i_t$ be the first vertex of $F$ encountered on such an arc, and
	let $i_{t+s}$ be the last one, where the indices are taken modulo $k$.
	The set $F$ need not contain every vertex
	$i_t,i_{t+1},\ldots,i_{t+s}$, but every vertex of $F$ lies among these
	vertices of $W$. Since the arc from $i_t$ to $i_{t+s}$ has length at
	most $m$, we have \( r_t+r_{t+1}+\cdots+r_{t+s-1}\leq m. \)
	By the maximality in the definition of $s_t$, this implies
	$s\leq s_t$. Consequently, \( 	F\subseteq
	\{i_t,i_{t+1},\ldots,i_{t+s}\}
	\subseteq
	\{i_t,i_{t+1},\ldots,i_{t+s_t}\}
	=
	F_t. \)
	
	Thus every face of $\Delta[W]$ is contained in some $F_t$, while every
	$F_t$ is itself a face of $\Delta[W]$. Therefore
	\[
	\Delta[W]=\bigcup_{t=1}^{k}\langle F_t\rangle.
	\]
	
	Finally, if $M$ is a facet of $\Delta[W]$, then $M\subseteq F_t$ for
	some $t$. Since $F_t$ is itself a face and $M$ is maximal with respect
	to inclusion, we must have $M=F_t$. Hence the facets of $\Delta[W]$
	are precisely the inclusion-maximal members among
	$F_1,\ldots,F_k$.
\end{proof}

%
%

\begin{proposition}\label{prop:circle}
	Let $n\geq 3m+1$ and let \( 	W=\{i_1,i_2,\ldots,i_k\}\in\mathcal{S}_V(k,m). \)
	Then
	\[
	\Delta[W]\simeq\mathbb{S}^1.
	\]
\end{proposition}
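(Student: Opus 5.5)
Our plan is to apply the Nerve Lemma (Theorem~\ref{thm:nerve}) to the cover of $\Delta[W]$ by the full simplices on its facets, as described in Lemma~\ref{lem:facetsW}. Let $M_1,\ldots,M_p$ be the distinct inclusion-maximal members among $F_1,\ldots,F_k$, listed in the cyclic order of their initial vertices $i_t$. By Lemma~\ref{lem:facetsW}, $\Delta[W]=\bigcup_j\langle M_j\rangle$. Every nonempty intersection of simplices is a simplex, hence contractible. Therefore $\Delta[W]$ is homotopy equivalent to the nerve $\mathcal N$ of this cover, and it remains to show that $\mathcal N\simeq\mathbb S^1$.

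\textbf{Step 1: $p\geq3$ and the facets are consecutive arcs.} Each $M_j$ is a set of cyclically consecutive vertices of $W$ whose span (the positively oriented arc from first to last vertex) has length at most $m$. Since the gaps sum to $n\geq3m+1$, at least four arcs of length at most $m$ are needed to cover the cycle, so $p\geq4$. Because the $M_j$ are maximal, both their initial indices and their terminal indices are strictly increasing in the cyclic order. Moreover, $M_j\cap M_{j+1}\neq\varnothing$: indeed $M_{j+1}$ starts at the vertex right after the start of $M_j$ or later, but $F_{t+1}\supseteq F_t\setminus\{i_t\}$, and a maximal member starting no later than the end of $M_j$ exists.

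\textbf{Step 2: the nerve is the cycle graph on $p$ vertices.} We show that $M_a\cap M_b\neq\varnothing$ if and only if $a,b$ are cyclically adjacent or the intersection is nested in a consecutive chain. More precisely, we show that the nerve deformation retracts onto a cycle. The key fact is that no three facets, and no two non-consecutive ones, can have a common vertex that would close up a ``shortcut''. Suppose a vertex $x$ lies in $M_a$ and $M_b$. Then the arcs of $M_a$ and $M_b$ both lie within distance $m$ of $x$, so their union lies in an arc of length at most $2m<n-m$. Hence any collection of facets with nonempty common intersection is contained in an arc of length at most $2m$, and cannot wrap around the cycle. Using the cyclic ordering, the facets sharing $x$ form a cyclic interval $M_a,M_{a+1},\ldots,M_b$ of consecutive facets, and any collection of facets with a common vertex forms a simplex of a ``circular-arc'' nerve. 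The nerve is then the nerve of a cover of the circle by arcs in which every point lies in a cyclic interval of arcs and no family covers the whole circle. Equivalently, it is the nerve of the open cover of $\mathbb S^1$ obtained by thickening each facet to the open arc it spans, where the arcs do not jointly cover using only a subfamily through a common point. Since all nonempty intersections of such proper arcs are arcs (connected, by the length bound $2m<n$), the Nerve Lemma applied in reverse gives $\mathcal N\simeq\mathbb S^1$.

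\textbf{Main obstacle.} The delicate point is Step 2. We must check that nonempty intersections of the spanning arcs in the circle are contractible, that is, that two such arcs never intersect in two disjoint pieces. It is also important that intersection of arcs corresponds exactly to intersection of vertex sets in $W$. Both facts rest on $n\geq3m+1$: two arcs of length at most $m$ have total length at most $2m<n$, so they meet in a connected arc. Moreover, every such arc begins and ends at vertices of $W$, so an overlap of arcs always contains a vertex of $W$. To make the equivalence with the circle cover clean, we would take the closed arc spanned by $M_j$ on the geometric circle. Consecutive arcs then share at least an endpoint vertex of $W$, these arcs cover the circle (gaps are at most $m$, so each gap edge lies in some $F_t$), and intersections are exactly the arcs spanned by common vertices. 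The Nerve Lemma for this closed cover of the circle (a triangulated $\mathbb S^1$ subdivided at the points of $W$) then yields $\mathcal N\simeq\mathbb S^1$, and hence $\Delta[W]\simeq\mathbb S^1$.
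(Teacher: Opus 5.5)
Your final paragraph is the paper's proof. You use closed arcs on $|\C_n|$ spanned by the facets, and note that intersections are connected because everything through a common point lies in an arc of length at most $2m<n$. You then observe that a nonempty intersection of arcs contains an endpoint, hence a vertex of $W$, so the two nerves coincide. The only difference is that you cover by the maximal $F_t$ rather than by all of $F_1,\ldots,F_k$, which is immaterial.

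Drop Step 1 and the first half of Step 2, however. The claim that the nerve is the cycle graph on $p$ vertices is false in general, and so is the claim that no three (or two non-consecutive) facets share a vertex. For example, with $n=13$, $m=3$, $W=V$, every $F_t=\{t,\ldots,t+3\}$ is a facet and $F_1\cap F_2\cap F_3=\{3,4\}$. None of this is needed once you use the closed-arc cover.
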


\begin{proof}
	Let $r_1,r_2,\ldots,r_k$ be the successive gaps associated to $W$,
	and let \( F_t=\{i_t,i_{t+1},\ldots,i_{t+s_t}\ \), where \( 1\leq t\leq k \)
	be the faces defined in Lemma~\ref{lem:facetsW}. Recall that
	\[
	s_t=
	\max\left\{
	1\leq s\leq k-1:
	r_t+r_{t+1}+\cdots+r_{t+s-1}\leq m
	\right\}.
	\]
	For each $t$, let \( \Sigma_t=\langle F_t\rangle \)
	denote the simplex having $F_t$ as its vertex set. By
	Lemma~\ref{lem:facetsW}, every face of $\Delta[W]$ is contained in
	some $F_t$. Hence
	\[
	\Delta[W]=\bigcup_{t=1}^{k}\Sigma_t.
	\]
	We first apply the Nerve Lemma to this covering. Since each
	$\Sigma_t$ is a simplex, it is contractible. Moreover, for any
	$t_1,\ldots,t_q$, whenever the intersection is nonempty, we have
	\[
	\Sigma_{t_1}\cap\cdots\cap\Sigma_{t_q}
	=
	\left\langle
	F_{t_1}\cap\cdots\cap F_{t_q}
	\right\rangle.
	\]
	Thus every nonempty finite intersection of the $\Sigma_t$'s is a
	simplex and is therefore contractible. Let \( 	N_W=\mathcal N(\Sigma_1,\ldots,\Sigma_k) \)
	be the nerve of this covering. By the Nerve Lemma, \( \Delta[W]\simeq N_W. \)
	
	It remains to determine the homotopy type of $N_W$. Consider the
	geometric realization of the cycle $\C_n$, which is homeomorphic to
	$\mathbb S^1$. For each $t$, let $A_t$ be the closed positively
	oriented arc of $\C_n$ beginning at $i_t$ and ending at
	$i_{t+s_t}$. We regard each $A_t$ as the corresponding subcomplex
	of the geometric realization of $\C_n$. By the definition of $s_t$,
	the length of $A_t$ is \( 	r_t+r_{t+1}+\cdots+r_{t+s_t-1}\leq m. \)
	Thus every $A_t$ is a proper arc of $\C_n$. Since $n\geq3m+1$,, we
	have \( m<\frac{n}{2}, \)
	so every $A_t$ has length strictly less than one half of the
	circumference of $\C_n$.
	
	We claim that $A_1,A_2,\ldots,A_k$ cover the whole cycle. Indeed,
	since $W\in\mathcal S_V(k,m)$, we have $r_t\leq m$ for every $t$.
	Hence $s_t\geq1$, and therefore $A_t$ contains the entire arc of
	$\C_n$ joining $i_t$ to $i_{t+1}$. The arcs \( [i_1,i_2],\ [i_2,i_3],\ldots,[i_k,i_1] \)
	together traverse $\C_n$ exactly once. Consequently,
	\[
	|\C_n|=A_1\cup A_2\cup\cdots\cup A_k.
	\]
	
	We next show that every nonempty finite intersection of the arcs
	$A_t$ is contractible. Suppose that
\( A_{t_1}\cap\cdots\cap A_{t_q}\neq\varnothing \)
	and choose \( x\in A_{t_1}\cap\cdots\cap A_{t_q}. \)
	Let
	\[
	\pi:\mathbb R\longrightarrow\mathbb R/n\mathbb Z
	\cong |\C_n|
	\]
	be the universal covering map, and fix a lift $\widetilde{x}$ of
	$x$. For each $j$, choose a lift $[a_j,b_j]$ of $A_{t_j}$ containing
	$\widetilde{x}$. After translating simultaneously, we may assume
	that $\widetilde{x}=0$. Thus \( a_j\leq0\leq b_j \) and \( b_j-a_j\leq m. \)
	It follows that
	\[
	\max_j b_j-\min_j a_j\leq2m<n.
	\]
	Hence $\pi$ is injective on \( \bigcup_{j=1}^{q}[a_j,b_j]. \)
	Therefore
	\[
	A_{t_1}\cap\cdots\cap A_{t_q}
	=
	\pi\left(
	\bigcap_{j=1}^{q}[a_j,b_j]
	\right)
	=
	\pi\left(
	\left[\max_j a_j,\min_j b_j\right]
	\right).
	\]
	Since the intersection is nonempty, it is either a closed arc or a
	single point, and hence is contractible. Thus every nonempty finite
	intersection of the subcomplexes $A_1,\ldots,A_k$ is contractible.
	Let \( N_A=\mathcal N(A_1,\ldots,A_k) \)
	be the nerve of this covering. By the Nerve Lemma,
	\[
	N_A\simeq|\C_n|\simeq\mathbb S^1.
	\]
	
	We now compare $N_A$ with $N_W$. We claim that, for any indices
	$t_1,\ldots,t_q$,
	\[
	A_{t_1}\cap\cdots\cap A_{t_q}\neq\varnothing
	\Longleftrightarrow
	F_{t_1}\cap\cdots\cap F_{t_q}\neq\varnothing.
	\]
	If \( i_j\in F_{t_1}\cap\cdots\cap F_{t_q}, \)
	then $i_j$ lies in each of the corresponding arcs, and hence \( 	i_j\in A_{t_1}\cap\cdots\cap A_{t_q}. \)

	Conversely, suppose that \( 	A_{t_1}\cap\cdots\cap A_{t_q}\neq\varnothing. \)
	As shown above, this intersection is either a closed arc or a single
	point. Since the endpoints of an intersection of finitely many
	closed intervals are endpoints of some of the constituent
	intervals, the corresponding intersection of the arcs contains an
	endpoint of one of $A_{t_1},\ldots,A_{t_q}$. Every such endpoint is
	a vertex of $W$. Hence the intersection contains some vertex
	$i_j\in W$. Since the vertices of $W$ lying on $A_{t_\ell}$ are
	precisely the vertices of $F_{t_\ell}$, we obtain \( 	i_j\in F_{t_1}\cap\cdots\cap F_{t_q}. \)
	Therefore \( 	F_{t_1}\cap\cdots\cap F_{t_q}\neq\varnothing. \)
	
	It follows that the two coverings have the same nerve, so \( N_W=N_A. \)
	Consequently,
	\[
	\Delta[W]\simeq N_W=N_A\simeq\mathbb S^1.
	\]
\end{proof}

\begin{lemma}\label{lem:circle}
	Let $n\geq3m+1$ and let \( W\in\mathcal{S}_V(k,m). \)
	Then
	\[
	\widetilde H_r(\Delta[W];\K)
	=
	\begin{cases}
		\K,&r=1,\\
		0,&r\neq1.
	\end{cases}
	\]
\end{lemma}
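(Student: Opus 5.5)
This is an immediate consequence of Proposition~\ref{prop:circle}, so the plan is to pass from homotopy type to homology. Under the hypotheses $n\geq3m+1$ and $W\in\mathcal S_V(k,m)$, Proposition~\ref{prop:circle} gives a homotopy equivalence $\Delta[W]\simeq\mathbb S^1$. Reduced simplicial homology with coefficients in $\K$ agrees with reduced singular homology of the geometric realization. Singular homology is a homotopy invariant. Hence
\[
\widetilde H_r(\Delta[W];\K)\cong\widetilde H_r(\mathbb S^1;\K)\quad\text{for every }r.
\]

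It then remains to recall the reduced homology of the circle. One has $\widetilde H_1(\mathbb S^1;\K)\cong\K$. One also has $\widetilde H_r(\mathbb S^1;\K)=0$ for every $r\neq1$. In particular the case $r=0$ holds because $\mathbb S^1$ is connected, and the cases $r\geq2$ hold because $\mathbb S^1$ is one-dimensional. This yields exactly the two cases in the statement.

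For a combinatorial double-check that avoids the homotopy step, one can use the nerve $N_W=N_A$ from the proof of Proposition~\ref{prop:circle}. One verifies directly that $N_A$ is connected and has Euler characteristic $0$, using that consecutive arcs $A_t$ and $A_{t+1}$ always meet. However, the homotopy invariance argument above is already complete, so this check is not needed.

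There is no real obstacle here. The only point worth stating explicitly is that a homotopy equivalence induces isomorphisms on reduced homology with field coefficients. Everything substantive, namely the nerve comparison, was already carried out in Proposition~\ref{prop:circle}.
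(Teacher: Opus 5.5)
Your argument is correct and is essentially the paper's proof: invoke Proposition~\ref{prop:circle} to get $\Delta[W]\simeq\mathbb S^1$, then read off the reduced homology of the circle by homotopy invariance. Your optional ``double-check'' would not work as stated, since a connected complex with Euler characteristic $0$ need not have the homology of $\mathbb S^1$; you do not rely on it, so this does not affect the proof.
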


\begin{proof}
	By Proposition~\ref{prop:circle}, \( \Delta[W]\simeq\mathbb S^1. \)
	Therefore \( 	\widetilde H_r(\Delta[W];\K)
	\cong
	\widetilde H_r(\mathbb S^1;\K), \)
	and hence
	\[
	\widetilde H_r(\Delta[W];\K)
	=
	\begin{cases}
		\K,&r=1,\\
		0,&r\neq1.
	\end{cases}
	\]
\end{proof}

\begin{theorem}\label{thm:betti2}
Let $n\geq3m+1$. Then, for $i\geq1$, \( \beta_{i,i+2}(G_{n,m})
=
\left|\mathcal{S}_V(i+2,m)\right|. \)
Hence,
\[
{
\beta_{i,i+2}(G_{n,m})
=
\frac{n}{i+2}
\sum_{q=0}^{\left\lfloor\frac{n-i-2}{m}\right\rfloor}
(-1)^q
\binom{i+2}{q}
\binom{n-mq-1}{i+1}
}.
\]
In particular, \( \beta_{i,i+2}(G_{n,m})=0 \) if \( i+2<\left\lceil\frac{n}{m}\right\rceil. \)

\end{theorem}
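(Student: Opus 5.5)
We apply Hochster's formula (Theorem~\ref{hochster}) with $j=i+2$. This gives
\[
\beta_{i,i+2}(G_{n,m})=\sum_{\substack{W\subseteq V\\ |W|=i+2}}\dim_{\K}\widetilde H_{1}(\Delta[W];\K),
\]
so the Betti number is the total first reduced homology of the induced subcomplexes on $(i+2)$-subsets. The plan is to split this sum according to whether $W$ lies in $\mathcal S_V(i+2,m)$ or not, and to use the homology computations already established for each case.

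If $W\notin\mathcal S_V(i+2,m)$, Lemma~\ref{lem:contractible} gives $\widetilde H_r(\Delta[W];\K)=0$ for every $r>0$. In particular $\widetilde H_1=0$, so these subsets contribute nothing. If $W\in\mathcal S_V(i+2,m)$, Lemma~\ref{lem:circle} (which uses $n\ge 3m+1$) gives $\widetilde H_1(\Delta[W];\K)\cong\K$, so each such subset contributes exactly $1$. Summing over both cases yields $\beta_{i,i+2}(G_{n,m})=|\mathcal S_V(i+2,m)|$.

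For the closed formula we substitute $k=i+2$ into Lemma~\ref{lem:count}. The factor $\binom{n-mq-1}{k-1}$ becomes $\binom{n-mq-1}{i+1}$, and the upper summation limit becomes $\lfloor (n-i-2)/m\rfloor$. Lemma~\ref{lem:count} is stated for $n\ge k\ge\lceil n/m\rceil$, so two edge cases need a check:
\begin{itemize}
\item If $i+2>n$, then there are no $(i+2)$-subsets, so the left side is $0$. The sum on the right is over an empty range, since $\lfloor(n-i-2)/m\rfloor<0$, so the right side is $0$ as well.
\item If $i+2<\lceil n/m\rceil$, Remark~\ref{rem:gaps} shows $\mathcal S_V(i+2,m)=\varnothing$, so $\beta_{i,i+2}=0$. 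This is the ``in particular'' clause. To keep the closed formula valid here, one notes that the coefficient $[z^{n-k}](1+z+\cdots+z^{m-1})^k$ vanishes because $n-k>k(m-1)$. Since the inclusion--exclusion expansion in the proof of Lemma~\ref{lem:count} is a pure coefficient identity, the alternating sum is also $0$.
\end{itemize}

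The only potential obstacle is this range bookkeeping: making sure the coefficient extraction of Lemma~\ref{lem:count} remains valid outside its stated range. The substantive topological input has already been supplied by Lemmas~\ref{lem:contractible} and~\ref{lem:circle}.
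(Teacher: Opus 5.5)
Your proposal is correct and follows the paper's proof exactly. Hochster's formula with $j=i+2$ reduces the problem to $\widetilde H_1$; Lemmas~\ref{lem:contractible} and~\ref{lem:circle} then give a contribution of exactly $1$ precisely for $W\in\mathcal S_V(i+2,m)$, and Lemma~\ref{lem:count} supplies the closed formula. Your extra range bookkeeping is valid and fills a detail the paper leaves implicit: for $i+2>n$ the sum is empty, and for $i+2<\lceil n/m\rceil$ the coefficient vanishes because $n-k>k(m-1)$.
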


\begin{proof}
By Hochster's formula,
\[
\beta_{i,i+2}(G_{n,m})
=
\sum_{\substack{W\subseteq V\\|W|=i+2}}
\dim_{\K}\widetilde H_1(\Delta[W];\K).
\]
By Lemmas~\ref{lem:circle} and~\ref{lem:contractible}, we see that
\[
\dim_{\K}\widetilde H_1(\Delta[W];\K)
=
\begin{cases}
1,&W\in\mathcal{S}_V(i+2,m),\\
0,&W\notin\mathcal{S}_V(i+2,m).
\end{cases}
\]
Therefore \( \beta_{i,i+2}(G_{n,m})
=
|\mathcal{S}_V(i+2,m)|. \)
The required formula now follows from Lemma~\ref{lem:count}.
\end{proof}

\begin{definition}
	A nonzero graded Betti number $\beta_{i,j}(G)$ is called
	\emph{extremal} if \( \beta_{p,q}(G)=0 \)
	for every $p\geq i$ and $q\geq j+1$ satisfying
	\( q-p\geq j-i. \)
\end{definition}

\begin{theorem}\label{thm:extremal}
Let $n\geq3m+1$. Then \( \beta_{n-2,n}(G_{n,m})=1. \)
Moreover, this is an extremal Betti number.
\end{theorem}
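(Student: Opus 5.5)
First compute $\beta_{n-2,n}(G_{n,m})$ from Theorem~\ref{thm:betti2} with $i=n-2$, so $i+2=n$. The only $n$-subset of $V$ is $V$ itself. All successive gaps of $V$ equal $1\leq m$, so $V\in\mathcal{S}_V(n,m)$ and $|\mathcal{S}_V(n,m)|=1$. As a check, the closed formula gives $\frac{n}{n}\binom{n-1}{n-1}=1$, since only $q=0$ contributes. Alternatively, Hochster's formula together with Proposition~\ref{prop:circle} gives $\dim\widetilde H_1(\Delta;\K)=1$ directly.

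Next comes extremality. We must show $\beta_{p,q}(G_{n,m})=0$ whenever $p\geq n-2$, $q\geq n+1$ and $q-p\geq 2$. By Hochster's formula, $\beta_{p,q}$ is a sum over subsets $W\subseteq V$ with $|W|=q$. Since $|V|=n$, there are no such subsets when $q>n$, so every such $\beta_{p,q}$ vanishes. This dispatches extremality immediately.

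The definition only constrains $q\geq j+1$, so strictly nothing more is needed. To make the statement robust, one can also note that $\beta_{p,q}=0$ whenever $p>n-2$ and $q\leq n$ with $q-p\geq2$, since then $q\geq p+2>n$ is impossible. More substantively, one can show $\reg\leq 2$. For every $W$, Lemma~\ref{lem:contractible} and Lemma~\ref{lem:circle} show that $\widetilde H_r(\Delta[W];\K)=0$ for all $r\geq 2$. Hence $\beta_{p,q}=0$ whenever $q-p-1\geq2$, i.e.\ $q-p\geq3$. Thus $\beta_{n-2,n}$ lies in the top strand and in the last possible column of size $n$.

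The only mildly delicate point is confirming that $V\in\mathcal{S}_V(n,m)$ and applying Theorem~\ref{thm:betti2}, which requires $i=n-2\geq1$. This holds because $n\geq3m+1\geq4$. Everything else is immediate from $|W|\leq n$.
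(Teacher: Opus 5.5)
Your proof is correct. The value $\beta_{n-2,n}(G_{n,m})=1$ is obtained exactly as in the paper: $V$ is the only $n$-subset, all its gaps equal $1\leq m$, so $V\in\mathcal{S}_V(n,m)$. Hochster's formula together with Proposition~\ref{prop:circle} (or Lemma~\ref{lem:circle}) then gives $\dim_{\K}\widetilde H_1(\Delta;\K)=1$.

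For extremality you take a more elementary route than the paper. You observe that the definition only requires $\beta_{p,q}=0$ for $q\geq n+1$, and these vanish trivially because Hochster's formula sums over subsets of an $n$-element set. Hence every nonzero Betti number of total degree $n$ is automatically extremal. Your argument therefore uses only the case $W=V$ of Proposition~\ref{prop:circle} and never needs Lemma~\ref{lem:contractible}.

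The paper instead invokes Lemmas~\ref{lem:contractible} and~\ref{lem:circle} to get $\widetilde H_r(\Delta[W];\K)=0$ for all $W$ and all $r>1$, i.e.\ no nonzero Betti numbers above the second strand. It leaves implicit the trivial vanishing for $q>n$ that you make explicit. Its argument proves more than extremality requires, but the extra vanishing is exactly the bound $\reg(G_{n,m})\leq 2$ reused in Corollary~\ref{cor:regpd}. Your supplementary paragraph recovers the same bound, so nothing is lost.
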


\begin{proof}
Since $V\in\mathcal{S}_V(n,m)$, Lemma~\ref{lem:circle} gives \( \dim_{\K}\widetilde H_1(\Delta;\K)=1. \)
Thus Hochster's formula yields \( \beta_{n-2,n}(G_{n,m})=1. \)
Furthermore, Lemmas~\ref{lem:contractible} and~\ref{lem:circle} show that
\( \widetilde H_r(\Delta[W];\K)=0 \)
for every $W\subseteq V$ and every $r>1$. Hence there are no nonzero Betti
numbers above the second strand, and $\beta_{n-2,n}(G_{n,m})$ is extremal.
\end{proof}

\begin{corollary}\label{cor:regpd}
	Let $n\geq3m+1$. Then \( \reg(G_{n,m})=2 \) and 
	\( \pd(G_{n,m})=n-2. \)
\end{corollary}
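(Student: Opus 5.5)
The plan is to read both invariants directly off the Betti table, using Hochster's formula together with the homology computations already established.

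\emph{Regularity: upper bound.} Every $\beta_{i,j}(G_{n,m})$ with $j-i\geq 3$ comes, via Hochster's formula, from some $\widetilde H_r(\Delta[W];\K)$ with $r=j-i-1\geq 2$. By Lemmas~\ref{lem:contractible} and~\ref{lem:circle}, all such groups vanish for every $W\subseteq V$. Hence $\beta_{i,j}(G_{n,m})=0$ whenever $j-i\geq3$, which gives $\reg(G_{n,m})\leq 2$.

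\emph{Regularity: lower bound.} Theorem~\ref{thm:extremal} gives $\beta_{n-2,n}(G_{n,m})=1\neq0$, and this entry has $j-i=2$. Therefore $\reg(G_{n,m})=2$.

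\emph{Projective dimension: lower bound.} The same nonzero entry $\beta_{n-2,n}$ gives $\pd(G_{n,m})\geq n-2$.

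\emph{Projective dimension: upper bound.} Here I must rule out nonzero $\beta_{i,j}$ with $i\geq n-1$. Hochster's formula only involves $|W|=j\leq n$, and $\beta_{i,j}=0$ unless $j\geq i$, since the homological degree $j-i-1\geq -1$ is needed. I would treat the possible values of $j-i$ in turn.

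\begin{itemize}
\item If $j-i=0$ with $i\geq1$, the term is $\widetilde H_{-1}(\Delta[W])$, which is nonzero only for $W=\varnothing$. That forces $j=0$, so these entries vanish.
\item If $j-i=1$ and $i\geq n-1$, then $j=n$ and $W=V$. The relevant group is $\widetilde H_0(\Delta)$, which vanishes because $\Delta=\operatorname{Cl}(\C_n^{[m]})$ is connected.
\item If $j-i=2$ and $i\geq n-1$, then $j\geq n+1>n$, which is impossible.
\item If $j-i\geq3$, the entry vanishes by the regularity argument above.
\end{itemize}

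Hence $\pd(G_{n,m})=n-2$. Alternatively, the upper bound follows from Auslander--Buchsbaum: $\operatorname{depth}R/I\geq1$ since $I(G_{n,m})$ is a proper nonzero ideal whose associated primes are not maximal, as $G_{n,m}$ has independent sets of size $\geq1$. The direct Hochster argument is cleaner, and I would use it.

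The only mildly delicate step is the vanishing of the $j-i=1$ entries at $i=n-1$, i.e.\ checking that $\Delta$ itself is connected. This holds because consecutive vertices of the cycle are adjacent in $\C_n^{[m]}$. Everything else is bookkeeping with the established lemmas.
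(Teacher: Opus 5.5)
Your argument is correct and is essentially the paper's proof: Hochster's formula with Lemmas~\ref{lem:contractible} and~\ref{lem:circle} kills every $\beta_{i,j}$ with $j-i\geq3$, the entry $\beta_{n-2,n}=1$ from Theorem~\ref{thm:extremal} gives both lower bounds, and connectedness of $\Delta$ kills $\beta_{n-1,n}$. The paper deduces connectedness from $\Delta\simeq\mathbb S^1$ rather than from the Hamiltonian cycle in the $1$-skeleton, and your case analysis on $j-i$ just makes explicit its remark that $\beta_{n-1,n}$ is the only remaining candidate.

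Drop the Auslander--Buchsbaum aside, however. The bound $\operatorname{depth}R/I(G_{n,m})\geq1$ only yields $\pd(G_{n,m})\leq n-1$. The depth $\geq2$ you would actually need is equivalent to $\widetilde H_0(\Delta;\K)=0$, which is the same connectedness fact your Hochster argument already uses.
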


\begin{proof}
	By Theorem~\ref{thm:extremal}, \( \beta_{n-2,n}(G_{n,m})=1. \)
	Since \( n-(n-2)=2, \)
	it follows that \( \reg(G_{n,m})\geq2. \)
	On the other hand, Lemmas~\ref{lem:circle} and
	\ref{lem:contractible} show that \( \widetilde H_r(\Delta[W];\K)=0 \)
	for every \( W\subseteq V \) and \( r>1 \).
Hochster's formula therefore implies that \( \beta_{i,j}(G_{n,m})=0 \) whenever \( j-i>2. \)
	Hence \( \reg(G_{n,m})\leq2, \) and therefore
	\[
	\reg(G_{n,m})=2.
	\]
	
	Furthermore, \( \beta_{n-2,n}(G_{n,m})\neq0 \)
	implies \( \pd(G_{n,m})\geq n-2. \)
	The only possible Betti number in a larger homological degree is
	$\beta_{n-1,n}(G_{n,m})$. By Hochster's formula,
	\[
	\beta_{n-1,n}(G_{n,m})
	=
	\dim_{\K}\widetilde H_0(\Delta;\K).
	\]
	Since Proposition~\ref{prop:circle} gives \( \Delta\simeq\mathbb{S}^{1}, \) the complex $\Delta$ is connected, and therefore \( \widetilde H_0(\Delta;\K)=0. \)
	Thus \( \beta_{n-1,n}(G_{n,m})=0, \)
	and hence
	\[
	\pd(G_{n,m})=n-2.
	\]
\end{proof}
\begin{corollary}\label{cor:im}
	Let $n\geq3m+1$. Then
	\[
	\operatorname{im}(G_{n,m})
	=
	\begin{cases}
		2, & 3m+1\leq n\leq4m,\\[2mm]
		1, & n\geq4m+1.
	\end{cases}
	\]
\end{corollary}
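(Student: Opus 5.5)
The plan is to combine Proposition~\ref{prop:im} with Remark~\ref{rem:im} and add only the missing upper bound. For $n\geq4m+1$, Proposition~\ref{prop:im} already gives $\operatorname{im}(G_{n,m})=1$. For $3m+1\leq n\leq4m$, Remark~\ref{rem:im} exhibits an induced matching of size two, so $\operatorname{im}(G_{n,m})\geq2$. What remains is to show that $G_{n,m}$ has no induced matching of size three in this range.

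\emph{Route via regularity.} The quickest argument uses Katzman's inequality $\reg(G)\geq\operatorname{im}(G)$, recalled in the Introduction. Corollary~\ref{cor:regpd} gives $\reg(G_{n,m})=2$ for every $n\geq3m+1$. Hence $\operatorname{im}(G_{n,m})\leq2$, which together with Remark~\ref{rem:im} yields equality.

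\emph{Direct combinatorial route (fallback).} One could also prove the upper bound directly, in the style of Proposition~\ref{prop:im}. Suppose $\{a,b\},\{c,d\},\{e,f\}$ form an induced matching in $G_{n,m}$.
\begin{enumerate}
\item All cross pairs have cyclic distance at most $m$.
\item Hence $\{a,c,e\}$ and $\{a,d,f\}$ are cliques of $\C_n^{[m]}$, since these are cross pairs from distinct edges.
\item By Lemma~\ref{lem:arc}, each of these cliques lies in an arc of length at most $m$.
\item In particular, $c$ and $d$ both lie within distance $m$ of $a$ and of $b$.
\item Place $a,b$ on the circle with $s=\mathrm d_{\C_n}(a,b)\geq m+1$. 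Then $c,d$ lie in the common $m$-neighbourhood of $a$ and $b$.
\item Because $n\leq4m$, this neighbourhood may now also include a piece of the longer arc. One must split into cases according to which of these two pieces contains $c$ and $d$. Since $\mathrm d(c,d)>m$, they must lie in different pieces. The same holds for $e$ and $f$.
\item Finally, derive a contradiction from the cross-adjacency of $c$ and $d$ with $e$ and $f$.
\end{enumerate}
The case analysis in the last two steps is the main obstacle, which is why I would rely on the regularity argument as the primary proof.

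Thus the proof reduces to citing Corollary~\ref{cor:regpd}, Katzman's bound, Remark~\ref{rem:im} and Proposition~\ref{prop:im}.
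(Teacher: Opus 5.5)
Your primary argument is correct and is exactly the paper's proof: Remark~\ref{rem:im} gives $\operatorname{im}(G_{n,m})\geq2$ for $3m+1\leq n\leq4m$, Katzman's bound $\operatorname{im}(G_{n,m})\leq\reg(G_{n,m})=2$ (via Corollary~\ref{cor:regpd}) gives the matching upper bound, and Proposition~\ref{prop:im} handles $n\geq4m+1$. The combinatorial fallback is not needed.
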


\begin{proof}
	Suppose first that $3m+1\leq n\leq4m$. By Remark~\ref{rem:im},
	\( \operatorname{im}(G_{n,m})\geq2. \)
	On the other hand, Katzman's bound gives \( 	\operatorname{im}(G_{n,m})
	\leq
	\reg(G_{n,m}). \)
	By Corollary~\ref{cor:regpd}, \( \reg(G_{n,m})=2. \)
	Hence \( \operatorname{im}(G_{n,m})=2. \)
	If $n\geq4m+1$, then Proposition~\ref{prop:im} gives \( \operatorname{im}(G_{n,m})=1. \)
\end{proof}

\section{Hilbert series and the linear strand}
\noindent
Let $\Gamma$ be a simplicial complex of dimension $d-1$. For
$-1\leq q\leq d-1$, let $f_q(\Gamma)$ denote the number of
$q$-dimensional faces of $\Gamma$, where $f_{-1}(\Gamma)=1$. The vector
\[
f(\Gamma)
=
(f_{-1},f_0,\ldots,f_{d-1})
\]
is called the $f$-vector of $\Gamma$.
On the other hand, the $h$-vector
\[
h(\Gamma)=(h_0,h_1,\ldots,h_d)
\]
is defined by the identity
\[
\sum_{j=0}^{d}h_jt^j
=
\sum_{j=0}^{d}
f_{j-1}t^j(1-t)^{d-j}.
\]
Equivalently,
\[
h_j
=
\sum_{r=0}^{j}
(-1)^{j-r}
\binom{d-r}{j-r}f_{r-1}.
\]

For a finitely generated graded $R$-module \( M=\bigoplus_{q\geq0}M_q, \) its Hilbert series is
\[
H(M;t)
=
\sum_{q\geq0}
\dim_{\K}(M_q)t^q.
\]
For a $(d-1)$-dimensional simplicial complex $\Gamma$, the Hilbert series
of its Stanley--Reisner ring is
\[
H(\K[\Gamma];t)
=
\frac{h_0+h_1t+\cdots+h_dt^d}{(1-t)^d}.
\]

We now compute the $f$-vector and the $h$-vector of $\Delta$.

\begin{lemma}\label{lem:fvector}
Let $n\geq3m+1$, and let $f_q=f_q(\Delta)$ denote the number of
$q$-dimensional faces of $\Delta$. Then \( f_0=n \)
and, for $1\leq q\leq m$, \( f_q=n\binom{m}{q}. \)
Moreover, \( f_q=0 \) for \( q>m. \)
Thus
\[
f(\Delta)
=
\left(
1,n,n\binom{m}{1},n\binom{m}{2},\ldots,n\binom{m}{m}
\right).
\]
\end{lemma}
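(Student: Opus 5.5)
The plan is to count faces of $\Delta=\operatorname{Cl}(\C_n^{[m]})$ through a canonical ``leftmost vertex'' for each nonempty face. By Lemma~\ref{lem:arc}, since $n\geq3m+1$, every clique $F$ of $\C_n^{[m]}$ lies in an arc of length at most $m$. Conversely, any subset of an arc of length at most $m$ is a clique, since any two of its vertices are at cyclic distance at most $m$. So the faces of $\Delta$ are exactly the subsets of arcs $\{v,v+1,\ldots,v+m\}$, and in particular $f_q=0$ for $q>m$, because such an arc has only $m+1$ vertices.

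For a nonempty face $F$ I will define its \emph{initial vertex} $v$ as the vertex such that $F\subseteq\{v,v+1,\ldots,v+m\}$ and $v\in F$. The main step, which I expect to be the only real obstacle, is showing this $v$ exists and is unique. Existence comes from the proof of Lemma~\ref{lem:arc}: take the minimal arc containing $F$ and let $v$ be its starting endpoint, which is the vertex $w$ in that proof (or $u$ if $q=0$).

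For uniqueness, suppose $v\neq v'$ both qualify. Then $v'\in\{v+1,\ldots,v+m\}$ and $v\in\{v'+1,\ldots,v'+m\}$. Hence $v'\equiv v+a$ and $v\equiv v'+b$ with $1\leq a,b\leq m$, so $n$ divides $a+b$. But $2\leq a+b\leq 2m<n$, which is a contradiction. Here $n\geq3m+1>2m$ suffices.

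Given uniqueness, each $q$-dimensional face with $q\geq1$ corresponds bijectively to a pair $(v,S)$, where $v\in V$ and $S$ is a $q$-subset of $\{v+1,\ldots,v+m\}$, with $F=\{v\}\cup S$. The $v+j$ are distinct modulo $n$ since $m<n$, so each such pair gives a face with initial vertex $v$. Therefore
\[
f_q=n\binom{m}{q}\qquad (1\leq q\leq m).
\]
This also gives $f_0=n$, since $\binom{m}{0}=1$, and $f_{-1}=1$ counts the empty face. Combining these yields the stated $f$-vector.

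As a sanity check, the case $m=3$, $n=12$ gives $f_3=12$, which matches the twelve facets $F_i$ in Figure~\ref{fig:C12third}.
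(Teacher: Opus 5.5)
Your proposal is correct and takes the same approach as the paper. You use Lemma~\ref{lem:arc} to place each face in an arc of length at most $m$, assign it a canonical first vertex $v\in F$, and biject the $q$-dimensional faces with pairs $(v,S)$, where $S$ is a $q$-subset of $\{v+1,\ldots,v+m\}$. Your explicit uniqueness argument ($n\mid a+b$ with $2\leq a+b\leq 2m<n$) is a slightly cleaner justification of the step the paper covers only by asserting that, since $m<n/2$, the shortest containing arc is unique.
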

\begin{proof}
	Since \( \Delta=\operatorname{Cl}(\C_n^{[m]}), \)
	the faces of $\Delta$ are precisely the cliques of $\C_n^{[m]}$.
	Clearly, \( f_0=n. \)
	
	Let $q\geq1$ and let $F$ be a $(q+1)$-element clique. By
	Lemma~\ref{lem:arc}, the vertices of $F$ are contained in an arc of
	$\C_n$ of length at most $m$. Since
	\( m<\frac n2, \)
	the shortest arc containing $F$ is unique. Let $v$ denote its first
	vertex in the positive direction. Necessarily,
	\( v\in F. \)
	
	Once $v$ is fixed, every other vertex of $F$ must belong to the next
	$m$ vertices \( v+1,v+2,\ldots,v+m. \)
	
	Conversely, if $v$ is fixed and $q$ vertices are chosen from these
	$m$ vertices, then the resulting $(q+1)$-element set lies in the arc
	from $v$ to $v+m$. Hence every two of its vertices are at distance at
	most $m$ in $\C_n$, and therefore it is a clique of $\C_n^{[m]}$.
	Moreover, since $m<n/2$, the vertex $v$ is the unique first vertex of
	its shortest containing arc.
	
	Thus the $(q+1)$-element cliques are in bijection with pairs consisting
	of a choice of the first vertex $v$ and a choice of $q$ vertices among
	the next $m$ vertices. Therefore \[	f_q=n\binom{m}{q},\]
	where \( 1\leq q\leq m. \)
	No clique can contain more than $m+1$ vertices, while \( \{v,v+1,\ldots,v+m\} \)
	is a clique of cardinality $m+1$. Hence \( \dim\Delta=m \)
	and \( f_q=0 \) for \( q>m. \)
\end{proof}

\begin{lemma}\label{lem:hvector}
Let \( h(\Delta)=(h_0,h_1,\ldots,h_{m+1}) \)
be the $h$-vector of $\Delta$. Then \( h_0=1, h_1=n-m-1, \) 
and for  \( 2\leq j\leq m+1 \),
\[
h_j=(-1)^j\binom{m+1}{j}.
\]
\end{lemma}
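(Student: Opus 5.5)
We will compute the $h$-vector directly from the $f$-vector obtained in Lemma~\ref{lem:fvector}, using the defining polynomial identity rather than the alternating-sum formula. By Lemma~\ref{lem:fvector}, $\dim\Delta=m$, so $d=m+1$. We also have $f_{-1}=1$, $f_0=n$, and $f_{j-1}=n\binom{m}{j-1}$ for $2\le j\le m+1$. The plan is to evaluate
\[
\sum_{j=0}^{m+1}h_jt^j=\sum_{j=0}^{m+1}f_{j-1}t^j(1-t)^{m+1-j}
\]
in closed form and then read off coefficients.

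First, note that the formula $f_{j-1}=n\binom{m}{j-1}$ also holds for $j=1$, since $n\binom{m}{0}=n=f_0$. So only the $j=0$ term is exceptional, and we may write
\[
\sum_{j}h_jt^j=(1-t)^{m+1}+n\sum_{j=1}^{m+1}\binom{m}{j-1}t^j(1-t)^{m+1-j}.
\]
Substituting $j=i+1$, the second sum becomes $nt\sum_{i=0}^{m}\binom{m}{i}t^i(1-t)^{m-i}=nt\,(t+(1-t))^m=nt$ by the binomial theorem. Hence
\[
\sum_{j=0}^{m+1}h_jt^j=(1-t)^{m+1}+nt.
\]

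Extracting coefficients then gives $h_0=1$ and $h_1=-(m+1)+n=n-m-1$. For $2\le j\le m+1$, the term $nt$ contributes nothing, so $h_j=(-1)^j\binom{m+1}{j}$, which is the claim.

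There is no genuine obstacle here. The only point needing care is checking that the $j=1$ term fits the uniform pattern $n\binom{m}{j-1}$, so that the binomial collapse applies cleanly. As an alternative, one could use the explicit alternating-sum formula for $h_j$ together with Vandermonde-type identities, but the generating-function route avoids that bookkeeping entirely.
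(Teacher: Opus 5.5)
Your proposal is correct and follows the paper's proof essentially line for line: you isolate the $f_{-1}$ term, use $f_{r-1}=n\binom{m}{r-1}$ for $1\le r\le m+1$ from Lemma~\ref{lem:fvector}, and collapse the remaining sum via the binomial theorem to $\sum_j h_jt^j=(1-t)^{m+1}+nt$. Reading off the coefficients of this polynomial gives the stated values of $h_0$, $h_1$ and $h_j$ for $2\le j\le m+1$.
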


\begin{proof}
	Since $\dim\Delta=m$, put \( d=m+1. \)
	By the defining relation between the $f$-vector and the $h$-vector,
	\[
	\sum_{j=0}^{m+1}h_jt^j
	=
	\sum_{r=0}^{m+1}
	f_{r-1}t^r(1-t)^{m+1-r}.
	\]
	Using \( f_{-1}=1 \) and, by Lemma~\ref{lem:fvector}, \( f_{r-1}=n\binom{m}{r-1}, \) \( 1\leq r\leq m+1, \)
	we obtain
	\[
	\begin{aligned}
		\sum_{j=0}^{m+1}h_jt^j
		&=
		(1-t)^{m+1}
		+
		n\sum_{r=1}^{m+1}
		\binom{m}{r-1}t^r(1-t)^{m+1-r}\\
		&=
		(1-t)^{m+1}
		+
		nt\sum_{q=0}^{m}
		\binom{m}{q}t^q(1-t)^{m-q}.
	\end{aligned}
	\]
	By the binomial theorem, \( 	\sum_{q=0}^{m}
	\binom{m}{q}t^q(1-t)^{m-q}
	=
	\bigl(t+(1-t)\bigr)^m
	=
	1. \)
	Hence
	\[
	\sum_{j=0}^{m+1}h_jt^j
	=
	(1-t)^{m+1}+nt.
	\]
	Expanding $(1-t)^{m+1}$ gives
\( h_0=1, h_1=n-m-1, \)
	and for \( 2\leq j\leq m+1 \),
	\[
	h_j=(-1)^j\binom{m+1}{j}.
	\]
\end{proof}

\begin{remark}
For $m=2$, Lemma~\ref{lem:hvector} gives \( h(\Delta)=(1,n-3,3,-1), \)
which agrees with the corresponding result for complements of squares
of cycles~\cite{RatherSquare}.
\end{remark}

Since $\Delta$ has $n$ vertices and $\dim\Delta=m$,
its Stanley--Reisner ring has Hilbert series
\[
H(\K[\Delta];t)
=
\frac{h_0+h_1t+\cdots+h_{m+1}t^{m+1}}
{(1-t)^{m+1}}.
\]
In order to express this Hilbert series with denominator $(1-t)^n$, put
\[
\widehat h(t)
=
(1-t)^{n-m-1}
\big(h_0+h_1t+\cdots+h_{m+1}t^{m+1}\big).
\]
Writing \( \widehat h(t)
=
\sum_{r=0}^{n}\widehat h_rt^r, \)
we obtain
\[
\widehat h_r
=
\sum_{j=0}^{r}
(-1)^{r-j}
\binom{n-m-1}{r-j}h_j,
\]
where $h_j=0$ for $j>m+1$. Therefore
\[
H(\K[\Delta];t)
=
\frac{\sum_{r=0}^{n}\widehat h_rt^r}{(1-t)^n}.
\]

\begin{theorem}\label{thm:linear}
	Let $n\geq3m+1$. For every $i\geq0$, \( \beta_{i+1,i+2}(G_{n,m})
	=
	(-1)^{i+1}\widehat h_{i+2}
	+
	\beta_{i,i+2}(G_{n,m}), \)
	where
	\[
	\widehat h_r
	=
	\sum_{j=0}^{r}
	(-1)^{r-j}
	\binom{n-m-1}{r-j}h_j
	\]
	and \( h(\Delta)=(h_0,h_1,\ldots,h_{m+1}) \)
	is as given in Lemma \ref{lem:hvector}.
\end{theorem}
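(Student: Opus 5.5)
We compare two expressions for the Hilbert series of $\K[\Delta]$. The first expression comes from the minimal graded free resolution. Since $\K[\Delta]$ has a resolution by the free modules $\bigoplus_j R(-j)^{\beta_{i,j}}$ and $H(R(-j);t)=t^j/(1-t)^n$, additivity of Hilbert series along the exact resolution gives
\[
H(\K[\Delta];t)=\frac{\sum_{i\geq0}\sum_j(-1)^i\beta_{i,j}(G_{n,m})\,t^j}{(1-t)^n}.
\]
The second expression is the one displayed just before the theorem, with numerator $\widehat h(t)=\sum_r \widehat h_r t^r$. Comparing the two numerators, which is legitimate since both series share the denominator $(1-t)^n$, yields for every $r$
\[
\widehat h_r=\sum_{i\geq0}(-1)^i\beta_{i,r}(G_{n,m}).
\]

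Next we use the vanishing of Betti numbers to reduce this alternating sum to exactly two terms. First, $\beta_{0,r}=0$ for $r>0$ and $\beta_{0,0}=1$. Second, $\beta_{i,j}=0$ whenever $j-i>2$: this is the regularity statement of Corollary~\ref{cor:regpd}, which follows from Lemmas~\ref{lem:contractible} and~\ref{lem:circle} via Hochster's formula. Third, $\beta_{i,j}=0$ whenever $j-i<1$ for $i\geq1$. Indeed, for $j=i$ Hochster's formula involves $\widetilde H_{-1}(\Delta[W])$ with $W\neq\varnothing$, which vanishes. Also $\beta_{i,j}=0$ for $j<i$ trivially.

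Hence for $r\geq2$ the only possibly nonzero Betti numbers of internal degree $r$ are $\beta_{r-1,r}$ (linear strand) and $\beta_{r-2,r}$ (second strand). Setting $r=i+2$ we obtain
\[
\widehat h_{i+2}=(-1)^{i+1}\beta_{i+1,i+2}(G_{n,m})+(-1)^i\beta_{i,i+2}(G_{n,m}).
\]
Multiplying by $(-1)^{i+1}$ and rearranging gives exactly the claimed identity $\beta_{i+1,i+2}=(-1)^{i+1}\widehat h_{i+2}+\beta_{i,i+2}$. For $i=0$ this reads $\beta_{1,2}=-\widehat h_2+\beta_{0,2}$ with $\beta_{0,2}=0$, which is consistent with the general formula.

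The formula for $\widehat h_r$ itself is just the coefficient extraction from $(1-t)^{n-m-1}\sum_j h_jt^j$ by the binomial theorem, with the $h_j$ taken from Lemma~\ref{lem:hvector}. The only step requiring care is justifying the two-strand vanishing, in particular that no Betti numbers lie strictly above the second strand. That vanishing is already established, so the main work is the bookkeeping of signs and the index shift $r=i+2$.
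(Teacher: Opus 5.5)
Your proposal is correct and follows the same route as the paper. Both arguments write the Hilbert series as the alternating Betti-number sum over $(1-t)^n$, use Corollary~\ref{cor:regpd} to restrict to the two strands $\beta_{i,i+1}$ and $\beta_{i,i+2}$, and compare the coefficient of $t^{i+2}$ with $\widehat h_{i+2}$. Your version is slightly more careful than the paper's: it explicitly justifies the vanishing below the linear strand, and it never needs $\pd(G_{n,m})=n-2$, which the paper uses implicitly to truncate its sum at $i=n-2$.
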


\begin{proof}
By Corollary~\ref{cor:regpd}, the minimal graded free resolution of the
Stanley--Reisner ring $\K[\Delta]$ has nonzero Betti numbers only in the
strands $\beta_{i,i+1}$ and $\beta_{i,i+2}$. Hence
\[
\mathcal{H}(\K[\Delta];t)
=
\frac{
1+\sum_{i=1}^{n-2}(-1)^i
\big(\beta_{i,i+1}t^{i+1}+\beta_{i,i+2}t^{i+2}\big)
}{(1-t)^n}.
\]
On the other hand,
\[
\mathcal{H}(\K[\Delta];t)
=
\frac{\sum_{r=0}^{n}\widehat h_rt^r}{(1-t)^n}.
\]
Comparing the coefficients of $t^{i+2}$ gives
\[
\widehat h_{i+2}
=
(-1)^{i+1}\beta_{i+1,i+2}
+
(-1)^i\beta_{i,i+2}.
\]
Therefore \( \beta_{i+1,i+2}
=
(-1)^{i+1}\widehat h_{i+2}
+
\beta_{i,i+2}. \)
Substituting the formulas for $\widehat h_{i+2}$ and
$\beta_{i,i+2}$ proves the result.
\end{proof}
\begin{corollary}
	For $i\geq0$,
	\[
	\begin{aligned}
		\beta_{i+1,i+2}(G_{n,m})
		={}&
		(-1)^{i+1}
		\sum_{j=0}^{i+2}
		(-1)^{i+2-j}
		\binom{n-m-1}{i+2-j}h_j\\
		&+
		\frac{n}{i+2}
		\sum_{q=0}^{\left\lfloor\frac{n-i-2}{m}\right\rfloor}
		(-1)^q
		\binom{i+2}{q}
		\binom{n-mq-1}{i+1},
	\end{aligned}
	\]
	where $h_j$ is as in Lemma~\ref{lem:hvector}.

\end{corollary}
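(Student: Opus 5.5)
This corollary is a direct substitution, so the plan is to combine Theorem~\ref{thm:linear} with the closed formulas already available. First I would invoke Theorem~\ref{thm:linear}, which gives, for every $i\geq0$,
\[
\beta_{i+1,i+2}(G_{n,m})=(-1)^{i+1}\widehat h_{i+2}+\beta_{i,i+2}(G_{n,m}).
\]
I would then replace $\widehat h_{i+2}$ by its defining expansion
\[
\widehat h_{i+2}=\sum_{j=0}^{i+2}(-1)^{i+2-j}\binom{n-m-1}{i+2-j}h_j .
\]
This expansion comes from multiplying the $h$-polynomial by $(1-t)^{n-m-1}$ and reading off the coefficient of $t^{i+2}$. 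We use the convention $h_j=0$ for $j>m+1$, with the $h_j$ given by Lemma~\ref{lem:hvector}.

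Next comes the second-strand term $\beta_{i,i+2}(G_{n,m})$. For $i\geq1$ I would substitute the closed formula of Theorem~\ref{thm:betti2}, which is exactly the second summand in the statement. The one point requiring care is $i=0$, since Theorem~\ref{thm:betti2} is stated only for $i\geq1$.

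\begin{itemize}
\item On the Betti-number side, $\beta_{0,2}(G_{n,m})=0$, because $\beta_{0,j}$ is nonzero only for $j=0$.
\item On the formula side, $i=0$ gives $\frac{n}{2}[z^{n-2}](1+z+\cdots+z^{m-1})^2$ by Lemma~\ref{lem:count}. This counts $2$-subsets $W$ whose two cyclic gaps are both at most $m$, which forces $n\leq 2m$. Since $n\geq3m+1$, the coefficient vanishes.
\item To check this directly on the alternating sum, note that the polynomial $(1+\cdots+z^{m-1})^2$ has degree $2m-2<n-2$. So the coefficient of $z^{n-2}$ is zero, and the binomial sum representing it, being an exact coefficient extraction, also equals zero.
\end{itemize}

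The formula therefore remains valid at $i=0$. The only step needing any attention is this boundary case; everything else is bookkeeping of signs and indices, and the result follows.
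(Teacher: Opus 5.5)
Your proposal is correct and is essentially the paper's argument: the corollary is just Theorem~\ref{thm:linear} with the expansion of $\widehat h_{i+2}$ and the closed formula of Theorem~\ref{thm:betti2} substituted in. Your separate treatment of $i=0$ is a useful addition, since Theorem~\ref{thm:betti2} is stated only for $i\geq1$ and the paper passes over this case. Note that Lemma~\ref{lem:count} is stated only for $k\geq\lceil n/m\rceil$, so it is your direct degree argument ($2m-2<n-2$), resting on the coefficient-extraction identity valid for every $k$, that actually justifies the vanishing of the second summand there.
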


\begin{example}\label{ex:C123}
	Consider the complement of the third closed power of the $12$-cycle, \( G_{12,3}=\overline{\C_{12}^{[3]}}. \)
	Since $12\geq3(3)+1$, the results obtained above apply to this graph.
	We compute all the graded Betti numbers of the edge ring
	$R/I(G_{12,3})$, where
	$R=\K[x_1,\ldots,x_{12}]$.
	
	We first determine the Betti numbers in the second strand. By
	Theorem~\ref{thm:betti2},
	\[
	\beta_{i,i+2}(G_{12,3})
	=
	\frac{12}{i+2}
	\sum_{q=0}^{\left\lfloor\frac{10-i}{3}\right\rfloor}
	(-1)^q
	\binom{i+2}{q}
	\binom{11-3q}{i+1}.
	\]
	Since \( \left\lceil\frac{12}{3}\right\rceil=4, \)
	we have $\beta_{1,3}(G_{12,3})=0$. For the remaining values of $i$,
	the above formula gives
	\[
	\begin{array}{c|ccccccccc}
		i&2&3&4&5&6&7&8&9&10\\
		\hline
		\beta_{i,i+2}
		&3&72&282&456&399&208&66&12&1.
	\end{array}
	\]
	
	Next we compute the linear strand. By Lemma~\ref{lem:fvector}, the
	$f$-vector of $\Delta=\Delta(G_{12,3})$ is
	\[
	f(\Delta)=(1,12,36,36,12).
	\]
	By Lemma~\ref{lem:hvector},
	\[
	h(\Delta)=(1,8,6,-4,1).
	\]
	Since $n-m-1=8$, we have
	\[
	\widehat h(t)
	=
	(1-t)^8(1+8t+6t^2-4t^3+t^4).
	\]
	Expanding, we obtain
	\[
	\widehat h(t)
	=
	1-30t^2+116t^3-177t^4+48t^5+252t^6
	-456t^7+399t^8-208t^9
	+66t^{10}-12t^{11}+t^{12}.
	\]
	Therefore, Theorem~\ref{thm:linear} gives
	\[
	\begin{aligned}
		\beta_{1,2}
		&=-\widehat h_2=30,\\
		\beta_{2,3}
		&=\widehat h_3+\beta_{1,3}=116,\\
		\beta_{3,4}
		&=-\widehat h_4+\beta_{2,4}=177+3=180,\\
		\beta_{4,5}
		&=\widehat h_5+\beta_{3,5}=48+72=120,\\
		\beta_{5,6}
		&=-\widehat h_6+\beta_{4,6}=-252+282=30.
	\end{aligned}
	\]
	For the remaining indices, the two terms in
	Theorem~\ref{thm:linear} cancel. Thus
	\[
	\beta_{6,7}
	=
	\beta_{7,8}
	=
	\beta_{8,9}
	=
	\beta_{9,10}
	=
	\beta_{10,11}
	=
	\beta_{11,12}
	=
	0.
	\]

	Hence the minimal graded free resolution has the form
\[
\begin{aligned}
	0\longrightarrow&
	R(-12)
	\longrightarrow R(-11)^{12}
	\longrightarrow R(-10)^{66}
	\longrightarrow R(-9)^{208}
	\longrightarrow R(-8)^{399}
	\longrightarrow
	\substack{
		R(-7)^{456}\\
		\oplus\\
		R(-6)^{30}
	}\\
	\longrightarrow	&
	\substack{
		R(-6)^{282}\\
		\oplus\\
		R(-5)^{120}
	}
	\longrightarrow
	\substack{
		R(-5)^{72}\\
		\oplus\\
		R(-4)^{180}
	}
	\longrightarrow
	\substack{
		R(-4)^3\\
		\oplus\\
		R(-3)^{116}
	}
	\longrightarrow
	R(-2)^{30}
	\longrightarrow R
	\longrightarrow R/I(G_{12,3})
	\longrightarrow0.
\end{aligned}
\]
The corresponding Betti table, computed using Macaulay-2~\cite{M2}, is
given below and agrees exactly with the Betti numbers obtained from our
formulas.
\[
\begin{array}{c|rrrrrrrrrrr}
	&0&1&2&3&4&5&6&7&8&9&10\\
	\hline
	\text{total:}
	&1&30&119&252&402&486&399&208&66&12&1\\
	0:
	&1&.&.&.&.&.&.&.&.&.&.\\
	1:
	&.&30&116&180&120&30&.&.&.&.&.\\
	2:
	&.&.&3&72&282&456&399&208&66&12&1
\end{array}
\]
Here the entry in column $i$ and row $j-i$ is $\beta_{i,j}$. In
particular, $\reg(G_{12,3})=2$ and $\pd(G_{12,3})=10$, in agreement
with Corollary~\ref{cor:regpd}.
\end{example}

%
%
\subsection*{Conflict of Interest}

The author declares that there is no conflict of interest.

\subsection*{Data Availability Statement}

All data generated or analysed during this study are included in this
article.

\end{document}